\shorttitle{Averaging for a PDMP in Infinite Dimensions}
\newcommand{\R}{\mathbb{R}}
\newcommand{\PP}{\mathbb{P}}
\newcommand{\EE}{\mathbb{E}}
\newcommand{\Ba}{\mathcal{B}}
\newcommand{\e}{\varepsilon}
\newcommand{\D}{\Delta}
\newcommand{\ZZ}{\mathbb{N}\cap{N\mathring{I}}}
\newcommand{\RR}{\mathcal{R}}
\begin{document}

\title{Averaging for a Fully-Coupled Piecewise Deterministic Markov Process in Infinite Dimensions} 

\authorone[Laboratoire de Probabilit\'es et Mod\`eles Al\'eatoires, Universit\'e Pierre et Marie Curie, Paris 6, UMR7599.]{Alexandre Genadot}
\emailone{algenadot@gmail.com.}
\authortwo[Laboratoire de Probabilit\'es et Mod\`eles Al\'eatoires, Universit\'e Pierre et Marie Curie, Paris 6, UMR7599 .]{Mich\`ele Thieullen}
\emailtwo{michele.thieullen@upmc.fr}
\address{ Universit\'e Pierre et Marie Curie, Paris 6, Case courrier 188, 4 Place Jussieu, 75252 Paris Cedex 05, France.\\
This work has been supported by the Agence Nationale de la Recherche through the project MANDy, Mathematical Analysis of Neuronal Dynamics, ANR-09-BLAN-0008-01.}

\begin{abstract}
In this paper, we consider the generalized Hodgkin-Huxley model introduced by Austin in \cite{Austin}. This model describes the propagation of an action potential along the axon of a neuron at the scale of ion channels. Mathematically, this model is a fully-coupled Piecewise Deterministic Markov Process (PDMP) in infinite dimensions. We introduce two time scales in this model in considering that some ion channels open and close at faster jump rates than others. We perform a slow-fast analysis of this model and prove that asymptotically this 'two-time-scales' model reduces to the so called averaged model which is still a PDMP in infinite dimensions for which we provide effective evolution equations and jump rates.
\end{abstract}

\keywords{piecewise deterministic Markov processes; fully-coupled systems; averaging principle; reaction diffusion equations; slow-fast systems; Markov chains; neuron models; Hodgkin-Huxley model.}

\ams{60B12; 60J75; 35K57}{92C20; 92C45}

\section{Introduction}

The Hodgkin-Huxley model is one of the most studied models in neuroscience since its creation in the early 50's by Hodgkin and Huxley \cite{HH}. This deterministic model was first created in order to describe the propagation of an action potential or nerve impulse along the axon of a neuron of the giant squid. In our paper, we will consider a mathematical generalization of the Hodgkin-Huxley model investigated numerically in \cite{FWL} and analytically in \cite{Austin}. This model, called the spatial stochastic Hodgkin-Huxley model in the sequel, is made of a partial differential equation (PDE) describing the evolution of the variation of the potential across the membrane, coupled with a continuous time Markov chain which describes the dynamics of ion channels which are present all along the axon. It is a Piecewise Deterministic Markov Process (PDMP) in infinite dimensions (see \cite{Buck_Ried,Wainrib2} for PDMP in infinite dimensions and also \cite{Costa1, Costa2,Crudu,Malrieu} and references therein for PDMP in finite dimension). Moreover it is fully-coupled in the sense that the evolution of the potential depends on the kinetics of ion channels and vice-versa. The role of ion channels is fundamental because they allow and amplify the propagation of an action potential. We introduce different time scales in this model considering that some ion channels open and close at faster rates than others. We perform a slow-fast analysis of this model and prove that asymptotically it reduces to the so called averaged model which is still a PDMP in infinite dimensions for which we provide effective evolution equations and jump rates. We thus reduce the complexity of the original model by simplifying the kinetics of ion channels. To the best of our knowledge no averaging results are available for PDMP in infinite dimensions.

Even if the Hodgkin-Huxley model was first introduced in neurophysiology, its paradigm is commonly used to describe the behavior of cardiac cells, see for instance \cite{Fl_al,Gr_al}. It was at first deterministic. In order to include more variability, better describe the axon itself and a certain class of phenomena, stochastic versions of this model have been proposed since the 90's, see for instance \cite{Ch_Wh,Def_Is,FWL,Fox_1}, introducing in the model two kinds of randomness. One is to include in the Hodgkin-Huxley equations an external noise modeling in this way the multitude of impulses received by one single neuron through its connections to the extraordinary complex system which is the brain. The other is to introduce in the Hodgkin-Huxley equations the intrinsic noise of the neuron due to the presence of ion channels which are intrinsically stochastic entities. The spatial stochastic Hodgkin-Huxley model which deals with the intrinsic noise of neurons pertains to the latter class.

The generalized Hodgkin-Huxley equations of \cite{Austin} belong also to a larger class of stochastic systems describing the evolution of a macroscopic variable, here the membrane potential, coupled with a Markovian dynamic modeling the kinetics of smaller entities, here ion channels. Between two successive jumps of the Markovian kinetics, the macroscopic variable follows a deterministic evolution. These models are known mathematically under the name of Piecewise Deterministic Markov Processes (PDMP) and are often called stochastic hybrid systems in the applications. The theory of PDMP in finite dimension (dealing with ordinary differential equations coupled with Markovian kinetics) has been developed in \cite{Davis_2,Davis_1}. The extension to infinite dimensions (dealing with PDEs coupled with Markovian kinetics) has been performed in \cite{Buck_Ried}. The spatial stochastic Hodgkin-Huxley model belongs to the class of PDMP in infinite dimensions since the equation describing the evolution of the membrane potential is a PDE.

Reducing the complexity of a model is of first importance since it allows to perform more tractable mathematics and numerical simulations and in this way, to go further in the understanding of the model. The reduction can be achieved by two different means. The first is to consider that the number or the population of ion channels is large enough to apply a law of large numbers argument so that we can replace the empirical proportion of ion channels in a given state by the averaged theoretical proportion of ion channels in this state. This is achieved by letting the number of ion channels go to infinity, the corresponding law of large numbers is proved in \cite{Austin} and in the context of PDMP in \cite{RTW}. The second way to reduce the complexity of the model, the one we will be interested in in this paper, is to consider that the model has intrinsically two time scales. It is a quite natural approach in this context, see for instance \cite{Fitz,Hille}.

In the present paper the number of ion channels is fixed and we consider that some ion channels have a faster dynamic than others. We perform then a slow-fast mathematical analysis of this 'two-time-scales' system. We obtain a reduced system in which the intrinsic variability of ion channels is still taken into account but with a simplified kinetic. Unlike the main result of \cite{Austin}, after reduction, we remain here at a stochastic level. The methods used in this paper are known mathematically under the name of stochastic averaging.  As far as we know, these methods have never been applied to a PDMP in infinite dimensions which we are dealing with in the present paper. For PDMPs in finite dimension, a theory of averaging has been developed in \cite{Fa_Cri} and central limit theorem has been proved in \cite{Wainrib2}.

In the infinite dimensional framework which we consider in this paper we have been able to reduce the complexity of a fully-coupled PDMP in infinite dimensions by using averaging methods. The natural step after this averaging result is to prove the central limit theorem associated to it. We wish to investigate this in a future work.

We conclude this introduction with the plan of the paper. In Section \ref{intro_math} we introduce the spatial stochastic Hodgkin-Huxley model and the formalism of PDMP in infinite dimensions. Then we introduce the two time scales in the model and prove a crucial result for the sequel. We finish this section by presenting the main assumptions on the model and our main result. Section \ref{section_proof} is devoted to the proof of our main result. In Section \ref{section_ex} we apply our result to a Hodgkin-Huxley type model as an example. At the end of the paper, an appendix provides an important lemma proved in \cite{Austin}, two results about tightness and the parameter values used in the simulation presented in Section \ref{section_ex}.

\subsection{Basic notations}

We set $I=[0,1]$. Let us define:
\[
\|f\|_H=\sqrt{\int_I (f(x))^2+(f'(x))^2dx}\quad\|f\|_{L^2(I)}=\sqrt{\int_I (f(x))^2dx}
\]
In this paper we will work with the following triplet of Banach spaces $H^1_0(I)\subset L^2(I)\subset H^{-1}(I)$. $H^1_0(I)$ will be denoted by $H$, it denotes the completion of the set of $\mathcal{C}^\infty$ functions with compact support on $I$ with respect to the norm $\|\cdot\|_H$. $L^2(I)$ endowed with the norm $\|\cdot\|_{L^2(I)}$ is the usual space of measurable and square integrable functions with respect to the Lebesgue measure on $I$. $H^{-1}(I)$ is the dual space of $H$ and will be denoted by $H^*$.

We recall here a few basic results about these three spaces.  $L^2(I)$ and $H$ are two separable Hilbert spaces endowed with their usual scalar products denoted respectively by $(\cdot,\cdot)_{L^2(I)}$ and $(\cdot,\cdot)$.  We denote by $<\cdot,\cdot>$ the duality pairing between $H$ and $H^*$.

For an integer $k\geq1$ we define the following functions on $I$:
\[
e_k(\cdot)=\frac{\sqrt2}{\sqrt{1+(k\pi)^2}}\sin(k\pi \cdot),\quad f_k(\cdot)=\sqrt2\sin(k\pi \cdot)
\]
The family $\{f_k,k\geq1\}$ (resp. $\{e_k,k\geq1\}$) is a Hilbert basis of $L^2(I)$ (resp. $H$). In $L^2(I)$ (resp. $H$), the Laplacian with zero Dirichlet boundary conditions has the following spectral decomposition:
\begin{equation*}
\D u=-\sum_{k\geq1}(k\pi)^2(u,f_k)_{L^2(I)}f_k\quad \left(\text{ resp. } \D u=-\sum_{k\geq1}(k\pi)^2(u,e_k)e_k\right) 
\end{equation*}
for $u$ in the domain $\mathcal{D}_{L^2(I)}(\D)=\{u\in L^2(I);\sum_{k\geq1}k^4 (u,f_k)^2_{L^2(I)}<\infty\}$ (resp. $\mathcal{D}(\D)=\{u\in H;\sum_{k\geq1}k^4 (u,e_k)^2<\infty\}$). We refer the reader to chapter 1, section 1.3 of \cite{Henry} for more details.

We denote by $C_P$ the (Poincar\'e) constant such that, for all $u\in H$ we have:
\[
\sup_{I}|u|\leq C_P\|u\|_H
\]

The embeddings $H\subset L^2(I)\subset H^*$ are continuous and dense. Moreover, for any $h\in L^2(I)$ and any $u\in H$: $ <h,u>\equiv(h,u)_{L^2(I)}$.

We say that a function $f:H\mapsto\R$ has a Fr\'echet derivative in $u\in H$ if there exists a bounded linear operator $T_u:H\mapsto \R$ such that:
\[
\lim_{h\to 0}\frac{f(u+h)-f(u)-T_u(h)}{\|h\|_H}=0
\]
We then write $\frac{df}{du}[u]$ for the operator $T_u$. For example, the square of the $\|\cdot\|_H$ norm and the Dirac distribution in $x\in I$ are Fr\'echet differentiable on $H$. For all $u\in H$:
\[
\frac{d\|\cdot\|^2_H}{du}[u](h)=2(u,h),\quad \frac{d\delta_x}{du}[u](h)=h(x)
\]
for all $h\in H$. Fr\'echet differentiation is stable by sum and product.

\section{Statement of the model and results}\label{intro_math}

\subsection{The spatial stochastic Hodgkin-Huxley model}

We introduce here the stochastic Hodgkin-Huxley equations considered in \cite{Austin}. Basically this spatial stochastic Hodgkin-Huxley model describes the propagation of an action potential along an axon. The axon is the part of a neuron which transmits the information received from the soma to another neuron on long distances: the length of the axon is big relative to the size of the soma. Along the axon are the ion channels which amplify and allow the propagation of the received impulse. For mathematical convenience, we assume that the axon is a segment of length one and we denote it by $I=[0,1]$. The ion channels are assumed to be in number $N\geq1$ and regularly placed along the axon at the place $\frac iN$ for $i\in\ZZ$. This distribution of the channels is certainly unrealistic but we assume it to fix the ideas and in accordance with \cite{Austin}. However the mathematics are the same if we consider any finite subset of $I$ instead of $\frac1N(\ZZ)$. Each ion channel can be in a state $\xi\in E$ where $E$ is a finite states space. For example, for the Hodgkin-Huxley model, a state can be : "receptive to sodium ions and open" (see \cite{Hille}).

The ion channels switch between states according to a continuous time Markov chain whose jump intensities depend on the local potential of the membrane, that is why the model is said to be fully-coupled. For any states $\xi,\zeta\in E$ we define by $\alpha_{\xi,\zeta}$ the jump intensity (or rate function) from the state $\xi$ to the state $\zeta$. It is a real valued function of a real variable and supposed to be, as is its derivative, Lipschitz-continuous. We assume moreover that: $0\leq\alpha_{\xi,\zeta}\leq\alpha^+$  for any $\xi,\zeta\in E$ and either $\alpha_{\xi,\zeta}$ is constant equal to zero or is strictly positive bounded below by a strictly positive constant $\alpha_-$. That is, the non-zero rate functions are bounded below and above by strictly positive constants. For a given channel, the rate function describes the rate at which it switches from one state to another. 

A possible configuration of all the $N$ ion channels is denoted by an element $r=(r(i),i\in\ZZ)$ of $\RR=E^{\ZZ}$: $r(i)$ is the state of the channel which is at the position $\frac iN$ for $i\in\ZZ$. The channels, or stochastic processes $r(i)$, are supposed to evolve independently over infinitesimal time-scales. Denoting by $u_t(\frac{i}{N})$ the local potential at point $\frac iN$ at time $t$, we have:
\begin{equation}\label{saut}
\PP(r_{t+h}(i)=\zeta|r_t(i)=\xi)=\alpha_{\xi,\zeta}\left(u_t\left(\frac{i}{N}\right)\right)h+o(h)
\end{equation}

For any $\xi\in E$ we also define the maximal conductance $c_\xi$ and the steady state potentials $v_\xi$ of a channel in state $\xi$ which are both  constants, the first being positive.

The membrane potential $u_t(x)$ along the axon evolves according to the following PDE:
\begin{equation}\label{dyn}
\partial_t u_t=\Delta u_t +\frac{1}{N}\sum_{\xi\in E}\sum_{i\in \ZZ}c_\xi 1_{\xi}(r_t(i))(v_\xi-u_t(\frac{i}{N}))\delta_{\frac{i}{N}}
\end{equation}
We will assume zero Dirichlet boundary conditions for this PDE (clamped axon). We are interested in the process $(u_t,r_t)_{t\in[0,T]}$.

We recall here the result of \cite{Austin} which states that there exists a stochastic process satisfying equations (\ref{dyn}) and (\ref{saut}). Let $u_0$ be in $\mathcal{D}(\Delta)$ such that $\min_{\xi\in E}v_\xi\leq u_0\leq\max_{\xi\in E}v_\xi$, the initial potential of the axon. Let $q_0\in\RR$ be the initial configuration of ion channels.

\begin{proposition}[\cite{Austin}]\label{prop_A1}
Fix $N\geq 1$ and let $(\Omega, \mathcal{F},(\mathcal{F}_t)_{0\leq t\leq T},\PP)$ be a filtered probability space satisfying the usual conditions. There exist a pair $(u_t,r_t)_{0\leq t\leq T}$ of c\`adl\`ag adapted stochastic processes such that each sample path of $u$ is a continuous map from $[0,T]$ to $H$ and $r_t$ is in $\RR$ for all $t\in[0,T]$ and satisfying:
\begin{itemize}
\item(Regularity)  The map $t\mapsto \partial_t u_t$ lies in $L^2([0,T],H^*)$ almost surely.
\item(Dynamic: PDE) 
\begin{equation*}
\partial_t u_t=\Delta u_t +\frac{1}{N}\sum_{\xi\in E}\sum_{i\in \ZZ}c_\xi 1_{\xi}(r_t(i))(v_\xi-u_t(\frac{i}{N}))\delta_{\frac{i}{N}}\end{equation*}
$\forall t\in[0,T]$, $\PP$-almost surely.
\item(Dynamic: jump)
\begin{equation*}
\PP(r_{t+h}(i)=\zeta|r_t(i)=\xi)=\alpha_{\xi,\zeta}\left(u_t\left(\frac{i}{N}\right)\right)h+o(h)
\end{equation*}
\item(Initial condition: PDE) $u_0$ given.
\item(Initial condition: jump) $q_0$ given.
\item(Boundary conditions: PDE only) $u_t(0)=u_t(1)=0$, $\forall t\in[0,T]$.
\end{itemize}
Moreover there exists a constant $C$ such that $\|u_t\|_\infty\leq C$ for all $t\in[0,T]$ and $\omega$.
\end{proposition}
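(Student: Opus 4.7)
The plan is to construct $(u_t, r_t)_{0 \leq t \leq T}$ by the standard PDMP iteration between successive jumps of the configuration process, following the finite-dimensional recipe of \cite{Davis_2} lifted to the infinite-dimensional framework of \cite{Buck_Ried}. There are two ingredients: a deterministic flow for $u$ with $r$ frozen, and a jump process for $r$ whose rate depends on $u$.

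First I would establish well-posedness of the frozen PDE: for any fixed $q \in \RR$, $s \geq 0$ and $v \in H$, the Cauchy problem
\[
\partial_t w_t = \D w_t + F(w_t, q), \quad w_s = v, \qquad F(w, q) = \frac{1}{N}\sum_{\xi, i} c_\xi 1_\xi(q(i))(v_\xi - w(i/N))\delta_{i/N},
\]
admits a unique solution $w \in L^2([s,T], H) \cap C([s,T], L^2(I))$ with $\partial_t w \in L^2([s,T], H^*)$. Since $H \hookrightarrow C(I)$ in one space dimension, each point evaluation $w \mapsto w(i/N)$ is a bounded linear functional on $H$, each $\delta_{i/N}$ lies in $H^*$ with norm controlled by $C_P$, and consequently $F(\cdot, q) : H \to H^*$ is affine and globally Lipschitz, uniformly in $q$. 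Existence and uniqueness then follow by a Galerkin approximation on the basis $\{e_k\}$ in the Gelfand triple $H \subset L^2(I) \subset H^*$, and the claimed regularity of $\partial_t w$ comes from reading $\partial_t w = \D w + F(w,q)$ with $\D w \in L^2([s,T], H^*)$ (since $w \in L^2([s,T], H)$) and $F(w, q) \in L^\infty([s,T], H^*)$.

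For the jump component, the total rate $\lambda(w, q) = \sum_i \sum_{\zeta \neq q(i)} \alpha_{q(i), \zeta}(w(i/N))$ is bounded by $N|E|\alpha^+$. Given the process constructed up to $T_n$ with $r_{T_n} = q_n$, I would extend $u$ past $T_n$ by the frozen flow and set
\[
T_{n+1} = \inf\Bigl\{ t > T_n : \int_{T_n}^t \lambda(u_\tau, q_n)\, d\tau \geq \mathcal{E}_{n+1} \Bigr\}
\]
with $\mathcal{E}_{n+1}$ an independent $\mathrm{Exp}(1)$ variable; at $T_{n+1}$ a transition $(i, \zeta)$ is selected with probability proportional to $\alpha_{q_n(i), \zeta}(u_{T_{n+1}}(i/N))$ and $r$ updated accordingly, while $u$ is kept continuous. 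The uniform bound on $\lambda$ prevents explosion, so finitely many jumps occur on $[0, T]$ almost surely; adaptedness, the c\`adl\`ag property and the infinitesimal formula (\ref{saut}) are then standard consequences of the inhomogeneous-exponential-clock description. The uniform $L^\infty$ bound follows by a maximum principle argument applied between jumps: with $M = \max_\xi v_\xi \vee 0$, testing the equation against $(u_t - M)_+ \in H$ (whose zero boundary values are compatible since $M \geq 0$) yields a non-positive diffusion contribution $-\|\nabla(u_t - M)_+\|_{L^2}^2$ and a non-positive source term, because $v_\xi \leq M$ wherever $(u_t - M)_+(i/N) > 0$; hence $(u_t - M)_+ \equiv 0$ propagates from the initial condition. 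The symmetric argument against $(m - u_t)_+$ with $m = \min_\xi v_\xi \wedge 0$ closes the bound, and the Sobolev embedding converts the pointwise control into the stated $C$-bound.

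The main obstacle is the first step: the Dirac-mass source forces the equation to be read in $H^*$, yet the nonlinearity $F$ only makes sense when the solution lies in $H$, so both $u \in L^2([0,T], H)$ and $\partial_t u \in L^2([0, T], H^*)$ must be produced simultaneously, with estimates stable enough to support concatenation across the random but finite sequence of jump intervals. Once this well-posedness is in place, the PDMP construction via competing exponential clocks and the energy estimate yielding the $L^\infty$ bound are essentially classical.
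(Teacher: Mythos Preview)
The paper does not supply a proof of this proposition: it is quoted verbatim from \cite{Austin} (note the attribution in the proposition heading), and the authors refer the reader there. So there is no in-paper argument to compare against; what can be said is whether your sketch reproduces the result.

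Your construction is the standard one and is essentially what \cite{Austin} and \cite{Buck_Ried} carry out: solve the frozen PDE to get a deterministic flow, layer the inhomogeneous exponential clocks on top, and iterate. The non-explosion argument via the uniform bound $\lambda \le N|E|\alpha^+$ is correct, and your maximum-principle computation for the $L^\infty$ bound is fine (this is exactly the energy/testing argument one expects, and it matches the pointwise bound $\min_\xi v_\xi \le u_t \le \max_\xi v_\xi$ that underlies the statement).

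There is one regularity gap worth flagging. Your Galerkin argument in the triple $H \subset L^2(I) \subset H^*$ yields $w \in L^2([s,T],H) \cap C([s,T],L^2(I))$, as you write; but the proposition asserts the stronger $u \in C([0,T],H)$, i.e.\ continuity in $H^1_0$, not merely in $L^2$. The variational framework does not give this for free. The route taken in \cite{Austin}, and used repeatedly later in the present paper (Proposition~\ref{Prop_bound}, the tightness proof), is the mild formulation
\[
u_t = e^{\Delta t}u_0 + \int_0^t e^{\Delta(t-s)} G_r(u_s)\,ds,
\]
together with the smoothing estimates of Lemma~\ref{Aulem} on $e^{\Delta t}\delta_y$, which show that the Duhamel integral is continuous into $H$. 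With $u_0 \in \mathcal{D}(\Delta)$ the semigroup term is trivially in $C([0,T],H)$, and Lemma~\ref{Aulem} handles the Dirac sources. You should either switch to the mild approach for this step or add a separate argument upgrading $C([0,T],L^2)$ to $C([0,T],H)$; as written, the claimed sample-path continuity in $H$ is not yet justified.
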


In \cite{Austin} the author proves that when the number of ion channels increases to infinity, the above model converges, in a sense, toward a deterministic model. In our paper, unlike in \cite{Austin} we will work with a fixed number of ion channels but introduce two time scales in the model in order to perform a slow-fast analysis.

\subsection{The spatial stochastic Hodgkin-Huxley model as a Piecewise Deterministic Markov Process in infinite dimensions}

The paper \cite{Buck_Ried} extends the theory of finite dimensional Piecewise Deterministic Markov Processes (PDMP) introduced in \cite{Davis_2} and \cite{Davis_1} to PDMP in infinite dimensions. The results stated here come from section 2.3 and 3.1 of \cite{Buck_Ried} adapted to our particular notations.

For all $r\in\RR$ we define the following function on $H$:
\begin{equation}\label{reac_r}
G_r(u)=\frac{1}{N}\sum_{\xi\in E}\sum_{i\in \ZZ}c_\xi 1_{\xi}(r(i))(v_\xi-u(\frac{i}{N}))\delta_{\frac{i}{N}}
\end{equation}
which is $H^*$ valued. We do not write the dependence in $N$ in the notation of $G_r(u)$ since, unlike in \cite{Austin}, $N$ is a fixed parameter here.

The stochastic process $(u_t,r_t)_{t\in[0,T]}$ can be described in the following way. We start with a given initial potential $v_0$ and ion channels configuration $q_0$. Then the $u$ component evolves according to the evolution equation:
\[
\partial_t u_t=\Delta u_t +G_{q_0}(u_t)
\]
until the first jump of the $r$ component at time $\tau_1$ which occurs according to the transition rate function $\Lambda : H\times\RR\to \R_+$ given for $(u,r)\in H\times\RR$ by:
\[
\Lambda(u,r)=\sum_{i\in\ZZ}\sum_{\xi\neq r(i)}\alpha_{r(i),\xi}(u(\frac iN))
\]
according to (\ref{saut}). A new value $q_1$ for $r$ is then chosen according to a transition measure $Q$ from $H\times\RR$ to $\mathcal{P}(\RR)$ (the set of all probability measures on $\RR$). This transition measure is also given by the jump distribution of the ion channels. For $(u,r)\in H\times\RR$ and $r'\in\RR$ which differs from $r$ only by the component $r(i_0)$:
\[
Q(u,r)(\{r'\})=\frac{\alpha_{r(i_0),r'(i_0)}(u(\frac{i_0}{N}))}{\Lambda(u,r)}
\]
If $r'$ differs from $r$ by two or more components then $Q(u,r)(\{r'\})=0$.

Then  $u$ evolves according to the "updated" evolution equation:
\[
\partial_t u_t=\Delta u_t +G_{q_1}(u_t)
\]
 with initial condition $u_{\tau_1}$ and Dirichlet boundary conditions, until the second jump of the $r$ component. And so on. This description justifies the term "Piecewise Deterministic Process": the dynamic of the process is indeed purely deterministic between the jumps.

As shown in \cite{Buck_Ried} in Theorem 6, the equations of Proposition \ref{prop_A1} define a standard Piecewise Deterministic Process (PDP) in the sense of Definition 3 of \cite{Buck_Ried}. We recall briefly here what this means technically:
\begin{itemize}
\item for a given ion channel configuration $r\in\RR$, the PDE:
\begin{equation*}\label{dyn_r_fix}
\partial_t u_t=\Delta u_t +G_r(u_t)
\end{equation*}
with zero Dirichlet boundary conditions admits a unique global (weak) solution continuous on $H$ for every initial value $u_0\in H$ denoted by $\psi_r(t,x)$.\\
\item The number of state switches in the $(r_t)_{t\in[0,T]}$ component during any finite time interval is finite almost surely.\\
\item The transition rate $\Lambda$ from one state of $\RR$ to another is a measurable function from $H\times\RR$ to $\R_+$ and for all $(u,r)\in H\times\RR$, the transition rate as a function of time is integrable over every finite time interval but diverges to $\infty$ when it is integrated over $\R_+$ (the expected number of jumps tends to $\infty$ when the time horizon increases).\\
\item The transition measure $Q$ from one state of $\RR$ to another is such that the mapping $(u,r)\to Q(u,r)(R)$ is measurable for every $R\subset\RR$ and $Q(u,r)(\{r\})=0$ for all $(u,r)\in H\times\RR$.
\end{itemize}
In fact the stochastic process $(u_t,r_t)_{t\in[0,T]}$ is more than a piecewise deterministic process, it is a piecewise deterministic \emph{Markov} process.
\begin{theorem}\label{th_BR}
\begin{enumerate} \item There exists a filtered probability space satisfying the usual conditions such that our infinite-dimensional standard PDP is a homogeneous Markov process on $H\times\RR$.\\
\item Locally bounded measurable functions on $H\times\RR$ which are absolutely continuous as maps $t\mapsto f(\psi_r(t,x),r)$ for all (x,r) are in the domain $\mathcal{D}(\mathcal{A})$ of the extended generator. The extended generator is given for almost all $t$ by:
\begin{equation}\label{gene}
\mathcal{A}f(u_t,r_t)=\frac{d}{dt}f(u_t,r_t)+\Ba f(u_t,r_t)
\end{equation}
where
\[
\Ba f(u_t,r_t)=\sum_{i\in\ZZ}\sum_{\zeta\in E}[f(u_t,r_t(r_t(i)\to\zeta))-f(u_t,r_t)]\alpha_{r_t(i),\zeta}(u_t(\frac{i}{N}))
\]
with $r_t(r_t(i)\to\zeta)$ is the component of $\RR$ with $r_t(r_t(i)\to\zeta)(j)$ equals to $r_t(j)$ if $j\neq i$ and to $\zeta$ if $j=i$. The notation $\frac{d}{dt}f(u_t,r_t)$ means that the function $s\mapsto f(u'_s,r)$ is differentiated at $s=t$, where $u'$ is the solution of the PDE of Proposition \ref{prop_A1} such that $u'_t=u_t$ and with the channel state $r$ held fixed equal to $r_t$.
\end{enumerate}
\end{theorem}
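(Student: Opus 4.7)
The plan is to piggy-back on the general theory of Buckwar--Riedler for PDMPs in infinite dimensions, which applies once we have checked that our process is a \emph{standard} PDP in their sense. The four defining conditions have already been listed in the paragraphs just above the theorem statement, so I would first verify each of them briefly for our model: global well-posedness of the flow $\psi_r$ comes from Proposition \ref{prop_A1} applied with the ion channel configuration frozen at $r$; the finiteness of the number of jumps on $[0,T]$ follows from the uniform upper bound $\Lambda(u,r)\leq N|E|\alpha^+$, which prevents explosion and also gives the required local integrability of $t\mapsto \Lambda(\psi_r(t,x),r)$; the divergence of this integral over $\R_+$ uses the lower bound $\alpha_-$ on non-zero rates together with the existence of at least one active transition; finally, measurability of $Q$ and $Q(u,r)(\{r\})=0$ are immediate from the explicit formula, since the numerator $\alpha_{r(i_0),r'(i_0)}$ vanishes when $r'(i_0)=r(i_0)$.

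For part 1, once the process is identified as a standard PDP, the construction of the filtered probability space and the Markov property can be obtained by the standard recipe: one iteratively builds the trajectory by following the flow $\psi_{q_k}$ starting from $u_{\tau_k}$ up to a jump time $\tau_{k+1}$ drawn from the inhomogeneous survival function $\exp(-\int_{\tau_k}^{\tau_k+s}\Lambda(\psi_{q_k}(r,\cdot),q_k)\,dr)$, and then resamples $q_{k+1}$ from $Q(u_{\tau_{k+1}},q_k)$. The Markov property on $H\times \RR$ then follows from the memoryless character of the jump times together with the semigroup property of the deterministic flow, exactly as in Theorem 4 of \cite{Buck_Ried}.

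For part 2, I would derive the generator by a Dynkin-type argument. Between two consecutive jumps, $f(u_t,r_t)$ is absolutely continuous in $t$ by hypothesis, with derivative $\frac{d}{dt}f(u_t,r_t)$ as defined in the statement. At a jump time $\tau_{k+1}$, the increment of $f$ is $f(u_{\tau_{k+1}},q_{k+1})-f(u_{\tau_{k+1}},q_k)$. Compensating the integer-valued random measure associated to the jumps by its predictable intensity $\Lambda(u_t,r_t)\,Q(u_t,r_t)(d\zeta)\,dt$ produces a martingale, and reading off the drift part gives
\[
\sum_{i\in\ZZ}\sum_{\zeta\in E}[f(u_t,r_t(r_t(i)\to\zeta))-f(u_t,r_t)]\,\alpha_{r_t(i),\zeta}\!\left(u_t\!\left(\tfrac{i}{N}\right)\right),
\]
which is exactly $\Ba f(u_t,r_t)$. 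Summing with the deterministic part yields \eqref{gene}, and the local boundedness hypothesis ensures integrability of all terms, so that the process $f(u_t,r_t)-\int_0^t \mathcal{A}f(u_s,r_s)\,ds$ is a local martingale, placing $f$ in $\mathcal{D}(\mathcal{A})$.

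The main conceptual obstacle is not any single computation but rather the infinite-dimensional nature of the state: one has to be careful that "absolutely continuous as a map $t\mapsto f(\psi_r(t,x),r)$" is the correct notion of smoothness along the flow (as opposed to Fr\'echet differentiability in $u$), since $\psi_r$ is only a weak solution of a PDE and $\partial_t u_t$ lives in $H^*$. This is precisely why \cite{Buck_Ried} formulates the domain of the extended generator in terms of absolute continuity of $t\mapsto f(\psi_r(t,x),r)$ rather than through pointwise differential calculus on $H$; invoking their framework bypasses this difficulty and makes the verification above routine.
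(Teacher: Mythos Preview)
Your proposal is correct and matches the paper's approach: the paper does not give an independent proof of Theorem~\ref{th_BR} but states it as a direct consequence of the general theory in \cite{Buck_Ried} (their Theorems~4 and~6), after having listed the four standard-PDP conditions in the paragraph preceding the statement. Your verification of those four conditions and subsequent appeal to the Buckwar--Riedler framework is exactly this, with a helpful sketch of the Dynkin-type argument that underlies the generator formula.
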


\begin{remark}
It is not usual to write a generator only along paths $(u_t,r_t)_{t\geq0}$ as we do. We would expect for a generator an analytical expression of the form $\mathcal{A}f(u,r)$ for any $(u,r)\in H\times\RR$. It is in fact possible to obtain such an expression for more regular function $f$. The part of the generator related to the continuous Markov chain $r$ takes the form $\Ba f(u,r)$ with no changes but the derivative $\frac{d}{dt}f(u,r)$ can then be expressed thanks to the Fr\'echet derivative of $f$, see \cite{Buck_Ried} for more details. We will not use this refinement in the sequel.
\end{remark}

Knowing that a stochastic process is Markovian is crucial for its study since a lot of mathematical tools have been developed for Markovian processes, see for example \cite{Eth_Kur}.

\subsection{Singularly perturbed model and main results}

\subsubsection{Introduction of two time scales in the model.}

We introduce now two time scales in the model. Indeed, we consider that the Hodgkin-Huxley model is a slow-fast system: some states of the ion channels communicate faster between them than others, see for example \cite{Hille}. Mathematically, this leads to introduce a small parameter $\e>0$ in our equation. For the states which communicate at a faster rate, we say that they communicate at the usual rate divided by $\e$.

We can then consider different classes of states or partition the state space $E$ in states which communicate at a high rate. This kind of description is very classical, see for example \cite{Fa_Cri}. We regroup our states in classes making a partition of the state space $E$ into: 
\[
E=E_1\sqcup\cdots\sqcup E_l
\]
where $l\in\{1,2,\cdots\}$ is the number of classes. Inside a class $E_j$, the states communicate faster at jump rates of order $\frac1\e$. States of different classes communicate at the usual rate of order $1$. For $\e>0$ fixed, we denote by $(u^\e,r^\e)$ the modification of the PDMP introduced in the previous section with now two time scales. Its generator is, for $f\in\mathcal{D}(\mathcal{A})$:
\begin{equation}\label{gene_eps}
\mathcal{A}f(u^\e_t,r^\e_t)=\frac{d}{dt}f(u^\e_t,r^\e_t)+\Ba^\e f(u^\e_t,r^\e_t)
\end{equation}
$\Ba^\e$ is the component of the generator related to the continuous time Markov chain $r^\e$. According to our slow-fast description we have the two time scales decomposition of this generator:
\begin{equation*}\label{gene_saut}
\Ba^\e =\frac1\e\Ba+\hat\Ba
\end{equation*}
where the "fast" generator $\Ba$ is given by:
\begin{equation*}\label{gene_fast}
\Ba f(u^\e_t,r^\e_t)=\sum_{i\in\ZZ}\sum_{j=1}^l1_{E_j}(r^\e_t(i))\sum_{\zeta\in E_j}[f(u^\e_t,r^\e_t(r^\e_t(i)\to\zeta)-f(u^\e_t,r^\e_t)]\alpha_{r^\e_t(i),\zeta}(u^\e_t(\frac{i}{N}))
\end{equation*}
and the "slow" generator is given by:
\begin{equation*}\label{gene_slow}
\hat\Ba f(u^\e_t,r^\e_t)=\sum_{i\in\ZZ}\sum_{j=1}^l1_{E_j}(r^\e_t(i))\sum_{\zeta\notin E_j}[f(u^\e_t,r^\e_t(r^\e_t(i)\to\zeta)-f(u^\e_t,r^\e_t)]\alpha_{r^\e_t(i),\zeta}(u^\e_t(\frac{i}{N}))
\end{equation*}

For $y\in \R$ fixed and $g:\R\times E\to\R$, we denote by $\Ba_j(y)$, $j\in\{1,\cdots,l\}$ the following generator:
\[
\Ba_j(y)g(\xi)=1_{E_j}(\xi)\sum_{\zeta\in E_j}[g(y,\zeta)-g(y,\xi)]\alpha_{\xi,\zeta}(y)
\]
which will be of interest in the sequel.

\subsubsection{Uniform boundedness.}

Here is a crucial result for the proof of our main result. Its proof takes back the argument developed in \cite{Austin} but for the sake of completeness and given the intensive use of this proposition in the sequel, we provide a short proof.
\begin{proposition}\label{Prop_bound}
For any $T>0$, there is a deterministic constant $C>0$ independent of $\e\in]0,1]$ such that:
\begin{equation*}
\sup_{t\in[0,T]} \|u^\e_t\|_H\leq C
\end{equation*}
almost surely.
\end{proposition}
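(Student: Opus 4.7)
My plan is to reduce the statement to two estimates: \emph{(i)} a uniform (in $\e$, $t$, $\omega$) $L^\infty(I)$ bound on $u^\e$, and \emph{(ii)} a smoothing estimate for the Dirichlet heat semigroup $S_t=e^{t\Delta}$ applied to Dirac masses. The structural observation driving everything is that $\e$ enters the coupled system \emph{only} through the jump rates of the chain $r^\e$: between two consecutive jumps of $r^\e$, the potential $u^\e$ solves the $\e$-independent PDE $\partial_t u=\Delta u+G_r(u)$ with $r=r^\e_{\tau_n^\e}$ frozen, and $u^\e$ is continuous at every jump time. Any pointwise-in-time bound preserved by this PDE flow is therefore automatically uniform in $\e$.

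For \emph{(i)} I would rerun the argument underlying Proposition~\ref{prop_A1}, which gives a deterministic $L^\infty$ bound for the single-scale model. That argument is of comparison/maximum-principle type: on each inter-jump interval the reaction term
\[
G_r(u)=\frac{1}{N}\sum_{\xi\in E}\sum_{i\in\ZZ}c_\xi 1_\xi(r(i))\left(v_\xi-u\!\left(\frac{i}{N}\right)\right)\delta_{\frac{i}{N}}
\]
is restoring at each site $\frac{i}{N}$, so that the initial condition $\min_\xi v_\xi\le u_0\le\max_\xi v_\xi$ propagates to later times up to the Dirichlet boundary. Since the proof never uses the law of the jumps of $r$, it transfers verbatim to $u^\e$ and produces a constant $C_\infty$ with $\|u^\e_t\|_\infty\le C_\infty$ independent of $\e$.

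For \emph{(ii)} I would use the mild (Duhamel) formulation
\[
u^\e_t=S_t u^\e_0+\int_0^t S_{t-s}\,G_{r^\e_s}(u^\e_s)\,ds.
\]
By \emph{(i)} the scalar weights $c_\xi(v_\xi-u^\e_s(i/N))$ are bounded by a constant depending only on $c^+$, $\max_\xi|v_\xi|$ and $C_\infty$, so $G_{r^\e_s}(u^\e_s)$ is a finite sum of Dirac masses with uniformly bounded weights. Expanding in the eigenbasis $f_k$ of $-\Delta$ one gets
\[
\|S_t\delta_{x_0}\|_H^2=2\sum_{k\ge 1}\bigl(1+(k\pi)^2\bigr)\sin^2(k\pi x_0)\,e^{-2(k\pi)^2 t}\le C_1\,t^{-3/2}
\]
for $t\in(0,T]$, hence $\|S_t\delta_{x_0}\|_H\le C_2\,t^{-3/4}$, which is integrable on $[0,T]$. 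Combining this with the contraction bound $\|S_t u^\e_0\|_H\le\|u^\e_0\|_H$ and $\int_0^T s^{-3/4}\,ds=4T^{1/4}$ yields $\|u^\e_t\|_H\le\|u_0\|_H+C_3 T^{1/4}$, uniformly in $\e$.

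The hard part is step \emph{(i)}: because $G_r(u)$ is a sum of Dirac masses, the classical pointwise maximum principle does not apply directly, and one has to regularise the deltas and pass to the limit, or test the weak formulation against suitable functions concentrated near the extrema of $u$. This is exactly the work done in \cite{Austin}, which I would import as a black box. Once the $L^\infty$ bound is available the $H$-bound reduces to the short spectral estimate above.
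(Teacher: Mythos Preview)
Your proposal is correct but takes a genuinely different route from the paper. The paper does \emph{not} use an a~priori $L^\infty$ bound as input. Instead it works directly in the $H$-norm: starting from the mild formulation, it separates the constant-coefficient part (with $v_\xi$) from the $u^\e_s(i/N)$-dependent part, and splits the latter Duhamel integral into a recent piece over $[t-\eta,t]$ and a distant piece over $[0,t-\eta]$. Austin's semigroup lemma (Appendix~\ref{app_lem}) makes the recent piece smaller than $\tfrac12\sup_{[0,t]}\|u^\e_s\|_H$ once one uses the one-dimensional Sobolev embedding $|u^\e_s(i/N)|\le C_P\|u^\e_s\|_H$, and bounds the distant piece by $C_2(\eta)\int_0^{t-\eta}\sup_{[0,s]}\|u^\e\|_H\,ds$. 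Gronwall then closes the estimate, with all constants visibly independent of the jump law and hence of~$\e$.

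Your approach instead imports the $L^\infty$ bound from Proposition~\ref{prop_A1} --- your observation that it is $\e$-uniform because $\e$ enters only through the jump rates, not through the inter-jump PDE, is exactly right --- and then uses the explicit smoothing rate $\|S_t\delta_{x_0}\|_H\lesssim t^{-3/4}$, which is integrable, to bound the Duhamel term in one stroke. This avoids both the near/far splitting and the Gronwall argument. Your version is shorter and more transparent once the $L^\infty$ bound is granted; the paper's version is more self-contained in that it establishes the $H$-bound without invoking a separate pointwise estimate, relying only on the embedding $H\hookrightarrow C(\bar I)$.
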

In the proof of this proposition, we will use the following representation of a solution of the PDE part of our PDMP. We say that $u^\e$ is a mild solution to (\ref{dyn}) if:
\begin{equation}\label{mild}
u^\e_t=e^{\D t}u_0+\int_0^t e^{\D (t-s)}G_{r^\e_s}(u^\e_s)ds,\quad t\geq 0
\end{equation}
almost surely where $(e^{\D t},t\geq0)$ is the semi-group associated to $\D$ in $H$. For any $u\in H$:
\begin{equation*}\label{semig}
e^{\D t}u=\sum_{k\geq 1}e^{-(k\pi)^2t}(u,e_k)e_k,\quad t>0
\end{equation*}
Let us mention that the mild formulation (\ref{mild}) holds also up to a bounded stopping time $\tau$, this will be useful later in the paper. 
\begin{proof}
We work $\omega$ by $\omega$. Using the mild formulation (\ref{mild}), we see that:
\begin{eqnarray*}
\|u^\e_t\|_H&\leq& \|e^{\Delta t}u_0\|_H+\frac{1}{N}\sum_{\xi\in E}\sum_{i\in \ZZ}\|\int_0^tc_\xi 1_{\xi}(r^\e_s(i))v_\xi e^{\Delta(t-s)}\delta_{\frac{i}{N}}ds\|_H\\
&+&\frac{1}{N}\sum_{\xi\in E}\sum_{i\in \ZZ}\|\int_0^tc_\xi u^\e_s(\frac{i}{N}) e^{\Delta(t-s)}\delta_{\frac{i}{N}}ds\|_H
\end{eqnarray*}
By Lemma \ref{Aulem} of Appendix \ref{app_lem} the first of the above sums is bounded by a fixed constant which can be chosen independent of $t\in[0,T]$. For the second sum, we break the integral in two pieces. For all $\gamma>0$ by Lemma \ref{Aulem}, we can choose $\eta>0$ so small that:
\[
\|\int_{t-\eta}^tc_\xi u^\e_s(\frac{i}{N}) e^{\Delta(t-s)}\delta_{\frac{i}{N}}ds\|_H\leq \frac{1}{2}\gamma\max_{s\in[0,t]}|c_\xi u^\e_s(\frac iN)|
\]
Since $|c_\xi u^\e_s(\frac iN)|\leq \max c_\xi \|u^\e_s\|_\infty\leq \max c_\xi C_P\|u^\e_s\|_H$, choosing $\gamma$ such that $\gamma\max c_\xi C_P<1$ we have:
\[
\|\int_{t-\eta}^tc_\xi u^\e_s(\frac{i}{N}) e^{\Delta(t-s)}\delta_{\frac{i}{N}}ds\|_H\leq \frac{1}{2}\max_{s\in[0,t]}\|u^\e_s\|_H
\]
Now the second estimates of Lemma \ref{Aulem}  gives: 
\begin{eqnarray*}
\|\int_0^{t-\eta}c_\xi u^\e_s(\frac{i}{N}) e^{\Delta(t-s)}\delta_{\frac{i}{N}}ds\|_H&\leq& C_2(\eta)\max_{\xi\in E}|c_\xi|\int_0^{t-\eta}\|u^\e_s\|_\infty ds\\
&\leq& C_PC_2(\eta)\max_{\xi\in E}|c_\xi|\int_0^{t-\eta}\max_{\sigma\in[0,s]}\|u_\sigma^\e\|_H ds
\end{eqnarray*}
Reassembling the above inequalities, the end of the proof is a classical application of the Gronwall's lemma. We find finally that  $\max_{s\in[0,t]}\|u_t^\e\|_H$ is bounded by a fixed constant independent of $t\in[0,T]$, $\omega$ and $\e\in]0,1]$. 
\end{proof}

\subsubsection{Main assumptions and averaged equation.}\label{MAAE}

We make the following assumptions. For any $y\in \R$ fixed, and any $j\in\{1,\cdots,l\}$, the fast generator $\Ba_j(y)$ is weakly irreducible on $E_j$, i.e has a unique quasi-stationary distribution denoted by $\mu_j(y)$.  This quasi-stationary distribution is supposed to be, as is its derivative, Lipschitz-continuous in its argument $y$ (see section \ref{Campaign} for more details).

Following \cite{Yin_Zhang}, the states in $E_j$ can be considered as equivalent. For any $i\in\ZZ$ we define the new stochastic process $(\bar r_t^\epsilon)_{t\geq0}$ by $\bar r_t^\e(i)=j$ when $ r_t^\e(i)\in E_j$ and abbreviate $E_j$ by $j$ . We then have an aggregate process $\bar r^\e(i)$ with values in $\{1,\cdots,l\}$. It is not a Markov process but we have the following proposition:

\begin{proposition}[\cite{Yin_Zhang}]\label{YZ}
For any $y\in\R$, $i\in\ZZ$ and $g:\{1,\cdots,l\}\to\R$, $\bar r^\e(i)$ converges weakly when $\e$ goes to $0$ to a Markov process $\bar r(i)$ generated by:
\[
\bar\Ba(y) g(\bar r(i))=\sum_{j=1}^l 1_{j}(\bar r(i))\sum_{k=1,k\neq j}^l (g(k)-g(j))\sum_{\xi\in E_k}\sum_{\zeta\in E_j}\alpha_{\zeta,\xi}(y)\mu_j(\zeta)
\]
\end{proposition}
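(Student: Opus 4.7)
The plan is to reduce the claim to a standard singular-perturbation/averaging statement for a continuous-time Markov chain on the finite state space $E$ with the parameter $y\in\R$ frozen. With $y$ fixed, $r^\e(i)$ is a Markov chain on $E$ whose generator splits as $\frac1\e L_0(y)+L_1(y)$, where $L_0(y):=\sum_{j=1}^l\Ba_j(y)$ is block-diagonal with one irreducible block on each $E_j$ (unique quasi-stationary distribution $\mu_j(y)$) and $L_1(y)$ carries only the inter-class rates. I would establish the weak convergence to the Markov process with generator $\bar\Ba(y)$ by combining tightness with the martingale problem characterization, via Kurtz's perturbed test function method.

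For tightness of $\{\bar r^\e(i)\}_{\e>0}$ in $D([0,T],\{1,\ldots,l\})$, I would note that jumps of $\bar r^\e(i)$ come only from the slow generator $L_1(y)$, whose total intensity is bounded by $(\mathrm{card}\,E)^2\max_{\xi,\zeta}\alpha_{\xi,\zeta}(y)$ uniformly in $\e$. Tightness follows at once from the Aldous--Rebolledo criterion applied to the compensated count of inter-class jumps, or equivalently from the tightness result for pure-jump processes on a finite space with uniformly bounded intensities (Chapter 4 of \cite{Eth_Kur}).

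To identify the limit, lift any $g:\{1,\ldots,l\}\to\R$ to $\tilde g(\xi):=g(j)$ for $\xi\in E_j$. Because $\tilde g$ is constant on each block, $L_0(y)\tilde g\equiv0$, and a direct computation gives
\[
L^\e\tilde g(\xi)=L_1(y)\tilde g(\xi)=\sum_{k\neq j}(g(k)-g(j))\sum_{\xi'\in E_k}\alpha_{\xi,\xi'}(y),\qquad \xi\in E_j,
\]
which still depends on $\xi$ inside $E_j$. To average this out I would introduce a corrector $h:E\to\R$ solving the class-wise Poisson equation $\Ba_j(y)h=\bar\Ba(y)g(j)-L_1(y)\tilde g$ on each $E_j$. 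The precise form of $\bar\Ba(y)g(j)$ in the statement is exactly the $\mu_j(y)$-mean of $L_1(y)\tilde g$ on $E_j$, so the right-hand side lies in the range of $\Ba_j(y)$; the Fredholm alternative on each finite irreducible block produces a bounded $h$, unique up to a class-wise constant. Set $g^\e:=\tilde g+\e h$. Then
\[
L^\e g^\e(\xi)=\bar\Ba(y)g(\pi(\xi))+\e L_1(y)h(\xi),
\]
where $\pi:E\to\{1,\ldots,l\}$ is the projection, and the $\e$-remainder is $O(\e)$ uniformly in $\xi$.

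Finally, the Dynkin martingale associated with $g^\e$ and $L^\e$ has uniformly bounded oscillations and, using $\sup_\xi|g^\e(\xi)-\tilde g(\xi)|=O(\e)$, passes to the weak limit along any tightness-extracted subsequence. Any limit point $\bar r(i)$ then satisfies: for every $g$, $g(\bar r_t(i))-g(\bar r_0(i))-\int_0^t\bar\Ba(y)g(\bar r_s(i))\,ds$ is a martingale. Since $\bar\Ba(y)$ is a bounded generator on the finite space $\{1,\ldots,l\}$, the corresponding martingale problem is well-posed, which upgrades subsequential weak convergence to convergence of the full family and yields the claim. The main obstacle I anticipate is the quantitative control of the corrector $h$: one needs uniform bounds on the pseudo-inverse of each irreducible block $\Ba_j(y)$, using the Lipschitz regularity of $\mu_j(y)$ assumed in Section \ref{MAAE}, to ensure that $\e L_1(y)h$ is genuinely $O(\e)$; once this is in place, the remainder of the argument is routine and matches the singular-perturbation theory of Yin and Zhang invoked in the statement.
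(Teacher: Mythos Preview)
The paper does not give its own proof of this proposition: it is quoted directly from \cite{Yin_Zhang} and used as a black box, so there is no argument in the paper to compare against. Your plan is a correct reconstruction of the standard singular-perturbation argument that underlies the Yin--Zhang result: freeze $y$, split the generator as $\frac1\e L_0(y)+L_1(y)$, get tightness from the uniform bound on inter-class jump intensities, and identify the limit via the perturbed test function $g^\e=\tilde g+\e h$ with $h$ solving the block-wise Poisson equation so that $L^\e g^\e=\bar\Ba(y)g\circ\pi+O(\e)$. The only point worth flagging is that for this proposition the potential $y$ is held fixed, so you do not actually need the Lipschitz regularity of $\mu_j(y)$ in $y$ to bound the corrector: the pseudo-inverse of each irreducible block $\Ba_j(y)$ is just a fixed matrix, and $h$ is automatically bounded on the finite set $E$. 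The regularity assumptions on $\mu_j$ are needed elsewhere in the paper (for the fully-coupled averaging), not here.
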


We then average the function $G_r(u)$ against the quasi-invariant distributions. That is we consider that each pack of states $E_j$ has reached its stationary behavior. For any $\bar r\in\bar\RR=\{1,\cdots,l\}^{\ZZ}$ we define the averaged function by:
\begin{equation}\label{F}
F_{\bar r}(u)=\frac{1}{N}\sum_{i\in \ZZ}\sum_{j=1}^l 1_{j}(\bar r(i))\sum_{\zeta\in E_j}c_\zeta \mu_j(u(\frac{i}{N}))(\zeta)(v_\zeta-u(\frac{i}{N}))\delta_{\frac{i}{N}}
\end{equation}

Therefore, we call averaged equation of (\ref{dyn}) the following PDE:
\begin{equation}\label{Au_moy}
\partial_t u_t=\D u_t + F_{\bar r_t}(u_t)
\end{equation}
with zero Dirichlet boundary condition and initial condition $u_0$ and $\bar q_0$ (where $\bar q_0$ is the aggregation of the initial channel configuration $q_0$). In equation (\ref{Au_moy}), the process $(\bar r_t)_{t\in[0,T]}$ evolves, each coordinate independently over infinitesimal time-scales, according to the averaged jump-rates between the subsets $E_j$ of $E$.

\begin{proposition}
For any $T>0$ there exists a probability space satisfying the usual conditions such that equation (\ref{Au_moy}) defines a PDMP $(u_t,\bar{r}_t)_{t\in[0,T]}$ in infinite dimensions in the sense of \cite{Buck_Ried}. Moreover, there is a constant $C$ such that:
\begin{equation*}
\sup_{t\in[0,T]}\|u_t\|_H\leq C
\end{equation*}
and $u\in\mathcal{C}([0,T],H)$ almost-surely.
\end{proposition}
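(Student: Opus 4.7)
The plan is to verify the four axioms of a standard PDP in the sense of \cite{Buck_Ried} (the bullet points stated just before Theorem \ref{th_BR}) for the averaged system, invoke Theorem 6 of \cite{Buck_Ried} to obtain the PDMP structure on an appropriate probability space, and then establish the uniform $H$-bound by reusing the argument of Proposition \ref{Prop_bound} and read off path continuity from the mild representation.

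The structural observation driving everything is that $F_{\bar r}(u)$ defined in \eqref{F} has exactly the same form as $G_r(u)$ in \eqref{reac_r}, with the indicator $\mathbf{1}_\xi(r(i))$ replaced by the number $\mathbf{1}_j(\bar r(i))\mu_j(u(i/N))(\zeta)\in[0,1]$. Since the standing assumption of Section \ref{MAAE} is that each $\mu_j$ and its derivative are Lipschitz, the coefficient in front of each $\delta_{i/N}$ is Lipschitz in $u(i/N)$, and via the Poincaré embedding $H\hookrightarrow\mathcal{C}(I)$ this makes $F_{\bar r}:H\to H^*$ globally Lipschitz. Hence, for every fixed $\bar r\in\bar\RR$ and every initial datum $u_0\in H$, the same fixed-point scheme used in \cite{Austin} for $G_r$ yields a unique global weak solution $\psi_{\bar r}(t,u_0)\in\mathcal{C}([0,T],H)$ of $\partial_t u=\Delta u+F_{\bar r}(u)$ with zero Dirichlet conditions. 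For the jump part, the aggregate rate read off from Proposition \ref{YZ} is
\[
\bar\Lambda(u,\bar r)=\sum_{i\in\ZZ}\sum_{k\neq\bar r(i)}\sum_{\xi\in E_k}\sum_{\zeta\in E_{\bar r(i)}}\alpha_{\zeta,\xi}\!\left(u\!\left(\tfrac{i}{N}\right)\right)\mu_{\bar r(i)}\!\left(u\!\left(\tfrac{i}{N}\right)\right)(\zeta),
\]
which is measurable, bounded above by $N\,\mathrm{card}(E)^2\,\alpha^+$, Lipschitz in $u$, and bounded below away from zero as long as at least one exit rate $\alpha_{\zeta,\xi}$ connecting two different classes is non-zero. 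This gives integrability on any finite interval and divergence on $\R_+$, while finiteness of the number of jumps on $[0,T]$ follows from the uniform upper bound on $\bar\Lambda$. The transition measure $\bar Q$ is defined in the obvious way using the $\bar\alpha_{j,k}$ in Proposition \ref{YZ}; measurability and $\bar Q(u,\bar r)(\{\bar r\})=0$ are immediate. Theorem 6 of \cite{Buck_Ried} then produces $(u_t,\bar r_t)_{t\in[0,T]}$ as a homogeneous PDMP on $H\times\bar\RR$.

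For the uniform bound and continuity I would repeat verbatim the argument of Proposition \ref{Prop_bound}. Writing the mild representation
\[
u_t=e^{\Delta t}u_0+\int_0^t e^{\Delta(t-s)}F_{\bar r_s}(u_s)\,ds,
\]
the decomposition into a $v_\zeta$-term and a $u_s(i/N)$-term is identical to \eqref{mild}, and Lemma \ref{Aulem} applies to each Dirac summand exactly as before since the coefficients $\tfrac{1}{N}c_\zeta\mu_j(u_s(i/N))(\zeta)$ are uniformly bounded by $\tfrac{1}{N}\max_\zeta c_\zeta$. Splitting the integral at $t-\eta$ and invoking Gronwall then yields a deterministic $C$ independent of $t\in[0,T]$ and $\omega$. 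The main (minor) point of care is ensuring that the Lipschitz dependence of $\mu_j$ on $u(i/N)$ is absorbed by the same Poincaré estimate $|u(i/N)|\leq C_P\|u\|_H$ used in Proposition \ref{Prop_bound}; this indeed goes through since the bound there depends only on the sup-norms of the coefficients and on $C_P$. Path continuity $u\in\mathcal{C}([0,T],H)$ follows from the mild formula on each inter-jump interval (continuous dependence in $t$ of the heat semigroup applied to an $H^*$-valued integrand) together with the fact that at a jump time only $\bar r$ switches, so $u$ has no discontinuity. The hardest step in practice is verifying the Lipschitz regularity of $F_{\bar r}$ from $H$ to $H^*$ cleanly, but as explained above this reduces, thanks to the Poincaré embedding, to the Lipschitz regularity of $\mu_j$ already assumed.
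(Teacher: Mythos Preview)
Your proposal is correct and follows exactly the approach the paper takes: the paper's proof is essentially a one-line observation that $F_{\bar r}$ has the same structure as $G_r$ with the indicator $1_\xi(r(i))$ replaced by the probability $\mu_j(u(i/N))(\zeta)$, so that all the arguments of \cite{Austin} and \cite{Buck_Ried} (well-posedness, PDMP structure) and of Proposition~\ref{Prop_bound} (uniform $H$-bound via Lemma~\ref{Aulem} and Gronwall) carry over verbatim. You have simply spelled out in detail what the paper summarizes as ``this changes nothing to the mathematics''.
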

\begin{proof}
The equation (\ref{Au_moy}) is of the form (\ref{dyn}) except that the indicator function is replaced by the probabilities $\mu_j$. This changes nothing to the mathematics and the arguments developed in \cite{Austin} or \cite{Buck_Ried} still apply. We refer the reader to \cite{Austin} and \cite{Buck_Ried} for more details.
\end{proof}

We can now state our main result which states that our approximation (\ref{Au_moy}) is valid.

\begin{theorem}\label{APDE}
The stochastic process $u^\e$ solution of (\ref{dyn}) converges in law to a solution of (\ref{Au_moy}) when $\e$ goes to $0$.
\end{theorem}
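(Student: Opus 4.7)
The plan is to proceed in the classical two steps for a stochastic averaging problem: first establish tightness of the laws of the pairs $(u^\e,\bar r^\e)$ on the path space $\mathcal{C}([0,T],H)\times D([0,T],\bar\RR)$, then identify every weak limit point as the law of a solution of the averaged PDMP (\ref{Au_moy}), and finally conclude by uniqueness in law.

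For tightness of $u^\e$ I would use the mild formulation (\ref{mild}) and Proposition \ref{Prop_bound}, which give a uniform-in-$\e$ bound on $\|u^\e_t\|_H$. The smoothing estimates on $e^{\D(t-s)}\delta_{i/N}$ provided by Lemma \ref{Aulem}, combined with the tightness criteria recalled in the appendix, yield an equicontinuity-in-$t$ control and hence tightness in $\mathcal{C}([0,T],H)$. Tightness of the aggregated process $\bar r^\e$ in $D([0,T],\bar\RR)$ is easier, because only inter-class jumps remain and these occur at rates of order $1$ uniformly in $\e$, so Aldous' criterion applies directly.

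Now fix a converging subsequence $(u^{\e_n},\bar r^{\e_n})\Rightarrow (u,\bar r)$. To identify the limit equation for $u$ I would rewrite the mild formulation as
\begin{equation*}
u^\e_t = e^{\D t}u_0 + \int_0^t e^{\D(t-s)}F_{\bar r^\e_s}(u^\e_s)\,ds + R^\e_t,\qquad R^\e_t := \int_0^t e^{\D(t-s)}\bigl[G_{r^\e_s}(u^\e_s)-F_{\bar r^\e_s}(u^\e_s)\bigr]ds.
\end{equation*}
Since $F_{\bar r}(u)$ is continuous in $(u,\bar r)$ and the semigroup smooths the Dirac masses into $H$, the first two terms converge along the subsequence to the corresponding terms of the averaged mild equation. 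The martingale characterization of the limit $\bar r$ as a Markov process with generator $\bar\Ba$ (see Proposition \ref{YZ}) is then obtained by applying the generator $\mathcal{A}^\e$ of (\ref{gene_eps}) to test functions depending only on $\bar r$, exploiting that $\Ba$ vanishes on such functions so that the $\e^{-1}$ term disappears.

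The main obstacle is to show that $R^\e_t\to 0$ in probability, uniformly in $t\in[0,T]$, despite the full coupling between $u^\e$ and $r^\e$. To treat it I would adapt the Khasminskii block-discretization. Partition $[0,T]$ into intervals of length $\Delta=\Delta(\e)$ with $\e\ll\Delta\ll 1$; on each block $[t_k,t_{k+1}]$, freeze $u^\e$ at $u^\e_{t_k}$ and $\bar r^\e$ at $\bar r^\e_{t_k}$, so that the fast chain $r^\e$ becomes (to leading order) an autonomous continuous-time Markov chain on each class $E_{\bar r^\e_{t_k}(i)}$ with generator $\e^{-1}\Ba_{\bar r^\e_{t_k}(i)}(u^\e_{t_k}(i/N))$. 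Exponentially fast convergence on the finite class to the quasi-stationary distribution $\mu_j$ gives
\begin{equation*}
\frac1{\Delta}\int_{t_k}^{t_{k+1}}G_{r^\e_s}(u^\e_{t_k})\,ds\;\longrightarrow\;F_{\bar r^\e_{t_k}}(u^\e_{t_k})
\end{equation*}
with an error controlled by the Lipschitz regularity of the $\alpha_{\xi,\zeta}$ and of $\mu_j$ assumed in Section \ref{MAAE}, together with Proposition \ref{Prop_bound}. Summing over $k$, bounding the freezing error via the equicontinuity of $u^\e$ and the Lipschitz character of $G_\cdot$ and $F_\cdot$, and using the smoothing of $e^{\D(t-s)}$ in $H^*$ yields $R^\e\to 0$. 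Equivalently, one may set up a perturbed test function on $H\times\RR$ by solving the Poisson equation $\Ba_j\chi(u,\cdot)=G_r(u)-F_{\bar r}(u)$ class by class; the perturbation $f+\e\chi$ kills the singular $\e^{-1}$ contribution in the martingale problem and leads to the same conclusion. Finally, uniqueness in law for the averaged PDMP (\ref{Au_moy}), itself a standard infinite-dimensional PDMP in the sense of \cite{Buck_Ried}, promotes convergence along subsequences to convergence of the whole family $u^\e$.
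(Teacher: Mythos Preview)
Your overall strategy (tightness, identification, uniqueness) and the ingredients for tightness of $u^\e$ (Proposition \ref{Prop_bound}, the mild formulation, Lemma \ref{Aulem}, and the Hilbert-space criteria of Appendix \ref{Met}) coincide with the paper's. For $\bar r^\e$ the paper argues by stochastic domination against a chain with constant inter-class rates $\alpha^+$ rather than by a direct Aldous estimate, but the content is the same.

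The substantive difference is in the identification step. You put Khasminskii block-discretization of the mild remainder $R^\e_t\in H$ first and mention the corrector/Poisson equation only as an alternative; the paper does the opposite. It tests the equation against $\phi\in\mathcal{C}^2_0(I)$ and solves the \emph{scalar} Poisson equation $\Ba f(u,r)=\langle G_r(u)-F_{\bar r}(u),\phi\rangle$ with the centering condition $\int f\,d\mu=0$, obtaining a bounded real-valued $f$ with bounded Fr\'echet derivative on the absorbing ball of Proposition \ref{Prop_bound} (this is Proposition \ref{Prop_Poi}). The semi-martingale expansion of $\e f(u^\e_t,r^\e_t)$ then yields the explicit rate $\EE\bigl|\int_0^t\langle G_{r^\e_s}(u^\e_s)-F_{\bar r^\e_s}(u^\e_s),\phi\rangle\,ds\bigr|^2\le C\e$, and one never has to control the $H$-norm of $e^{\D(t-s)}\delta_{i/N}$ block by block near the diagonal $s=t$, as your Khasminskii argument on $R^\e_t$ in $H$ would require. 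Your route is workable, but would need the extra near-diagonal estimate (essentially the second assertion of Lemma \ref{Aulem}, part 1) together with careful bookkeeping of the freezing errors for both $u^\e$ and the time-varying rates; the paper sidesteps all of this by passing to the scalar weak form first and gets a quantitative rate for free.

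One point in your sketch deserves caution: for a test function $g(\bar r)$ the fast generator $\Ba$ indeed vanishes, but $\hat\Ba g(u,r)$ still depends on the full configuration $r$ through the rates $\alpha_{r(i),\zeta}$, so the averaged generator $\bar\Ba$ does not drop out of the martingale problem immediately. A further averaging over the quasi-stationary measures $\mu_j$ (i.e.\ a second corrector, or a joint Poisson equation handling both the $u$-equation and the $\bar r$-generator) is needed; merely observing that the $\e^{-1}$ term disappears is not enough to identify the limiting dynamics of $\bar r$.
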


More precisely, we will prove the following proposition:

\begin{proposition}\label{prop_main}
$(u^\e,\e\in]0,1])$ is tight in $\mathcal{C}([0,T],H)$ and any accumulation point $u$ verifies:
\begin{equation}\label{id_lim}
(u_t,\phi)_{L^2(I)}=(u_0,\phi)_{L^2(I)}+\int_0^t (u_s,\phi'')_{L^2(I)}ds+\int_0^t <F_{\bar{r}_s}(u_s),\phi>ds
\end{equation}
for all $\phi$ in $\mathcal{C}^2_0(I)$. Moreover the accumulation point is unique up to indistinguishability.  
\end{proposition}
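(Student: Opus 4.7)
\emph{Tightness.} I would combine the uniform bound $\sup_{t \in [0,T]} \|u^\e_t\|_H \leq C$ of Proposition \ref{Prop_bound} with an equicontinuity estimate derived from the mild formulation (\ref{mild}). The first estimate of Lemma \ref{Aulem} controls $\|\int_t^{t+h} e^{\D(t+h-s)}\delta_{i/N}\,ds\|_H$ by a modulus $\kappa(h) \to 0$; together with the uniform bound (which implies in particular a uniform $H^\ast$-bound on $G_{r^\e_s}(u^\e_s)$, since the randomness enters only through the finite sum of indicators in (\ref{reac_r})), one deduces $\|u^\e_{t+h} - u^\e_t\|_H \leq \tilde\kappa(h)$ uniformly in $\e \in ]0,1]$ and $\omega$. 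The tightness criterion provided in the appendix then applies.

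\emph{Averaging via a perturbed test function.} Applying Theorem \ref{th_BR} to the $r$-independent test function $f_0(u) = (u,\phi)_{L^2(I)}$ yields no jump martingale, and integration by parts (valid since $\phi \in \mathcal{C}^2_0(I)$) gives the pre-limit identity
\[
(u^\e_t, \phi)_{L^2(I)} = (u_0,\phi)_{L^2(I)} + \int_0^t (u^\e_s, \phi'')_{L^2(I)}\,ds + \int_0^t \langle G_{r^\e_s}(u^\e_s), \phi \rangle\,ds.
\]
I would replace the last integrand by $\langle F_{\bar r^\e_s}(u^\e_s), \phi\rangle$ via a corrector. Since $\Ba$ decouples over the sites $i$, for each $i$ and class $j$ I solve the scalar Poisson equation $\Ba_j(u(i/N)) \psi_{j,i} = h_{j,i}$ on $E_j$, where $h_{j,i}$ is the $i$-th centred contribution to $\langle G_r(u) - F_{\bar r}(u), \phi \rangle$. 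Weak irreducibility of $\Ba_j(y)$ produces a bounded solution $\psi_{j,i}$; the Lipschitz regularity of $\alpha_{\xi,\zeta}$ and of $\mu_j(y)$ transfers to $\psi_{j,i}$, which depends on $u$ only through the point evaluation $u(i/N)$ and is therefore Fréchet-differentiable in $u$. Setting $\Psi(u,r) = \sum_i \psi_{\bar r(i), i}(r(i); u, \bar r)$ and applying the extended generator $\mathcal{A}^\e$ to $\e \Psi$, the $\frac{1}{\e}\Ba$-term contributes exactly $\langle F_{\bar r^\e_s}(u^\e_s) - G_{r^\e_s}(u^\e_s), \phi \rangle$; the remaining contributions are $O(\e)$ in sup-norm, and the associated jump martingale has predictable bracket of order $\e^2 \cdot (1/\e) \cdot T = \e T$ (jumps of size $O(\e)$ at rate $O(1/\e)$). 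Integrating Dynkin's identity over $[0,t]$ yields
\[
\int_0^t \langle G_{r^\e_s}(u^\e_s), \phi \rangle\,ds = \int_0^t \langle F_{\bar r^\e_s}(u^\e_s), \phi \rangle\,ds + o_\PP(1).
\]

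\emph{Limit passage and uniqueness.} Tightness of $u^\e$ together with finiteness of the state space $\bar\RR$ gives joint tightness of $(u^\e, \bar r^\e)$ in $\mathcal{C}([0,T], H) \times D([0,T], \bar\RR)$. Along a converging subsequence, Skorokhod representation allows almost-sure convergence, and Proposition \ref{YZ} — extended to the slowly varying argument $u^\e_s$ through its martingale problem — identifies the limit $\bar r$ as the $\bar\Ba(u_s)$-Markov process; continuity of $(u, \bar r) \mapsto \langle F_{\bar r}(u), \phi \rangle$ at continuity points of $\bar r$, combined with dominated convergence, then yields (\ref{id_lim}). For uniqueness, the limit pair $(u, \bar r)$ solves the averaged PDMP whose well-posedness is given by the proposition preceding Theorem \ref{APDE}: equation (\ref{Au_moy}) defines a PDMP unique in law, hence $u$ is unique in law, and pathwise uniqueness of the parabolic PDE with $\bar r$ fixed yields indistinguishability. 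The main obstacle is the construction and control of the corrector $\Psi$: one must verify that it lies in the extended generator's domain (Fréchet-differentiability in $u$ and absolute continuity along the deterministic flow) and that its sup-norm and $u$-derivative are bounded uniformly in $\e$ and $\omega$, so that every remainder produced by Dynkin's identity is genuinely $o(1)$.
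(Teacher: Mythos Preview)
Your overall strategy---perturbed test function via a Poisson equation for the fast generator, then a semi-martingale decomposition with $O(\e)$ remainders---is exactly the paper's. Your site-by-site construction of $\Psi$ is equivalent to the paper's global Fredholm argument on $\RR$, since both the fast generator $\Ba$ and the centred integrand $\langle G_r(u)-F_{\bar r}(u),\phi\rangle$ decouple over $i\in\ZZ$; the regularity and boundedness claims you make for $\Psi$ correspond precisely to the paper's Proposition~\ref{Prop_Poi}.

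There are, however, two genuine gaps in your tightness argument. First, equicontinuity plus the uniform $H$-bound does \emph{not} yield tightness in $\mathcal{C}([0,T],H)$: closed bounded balls in $H$ are not compact, so Theorem~\ref{TH} requires in addition the finite-dimensional approximation condition~(\ref{TH2}). The paper supplies this by truncating the mild representation at level $p$, defining $u^{\e,p}_t\in\operatorname{span}\{e_1,\dots,e_p\}$, and showing $\|u^\e_t-u^{\e,p}_t\|_H$ can be made small uniformly in $\e$. You must do something equivalent. (A smaller point: your equicontinuity estimate only treats the term $\int_t^{t+h}e^{\D(t+h-s)}G\,ds$; the pieces $(e^{\D(t+h)}-e^{\D t})u_0$ and $\int_0^t(e^{\D(t+h-s)}-e^{\D(t-s)})G\,ds$ also need control, as the paper carries out explicitly.) Second, ``finiteness of the state space $\bar\RR$'' does not give tightness of $(\bar r^\e)$ in $D([0,T],\bar\RR)$: one must bound the number of class-changing jumps on $[0,T]$. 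This is easy here because the between-class rates are $O(1)$, but it has to be said; the paper does it by stochastic comparison with a constant-rate chain $r^{\e,\mathrm{Max}}$.

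For uniqueness, the paper gives a direct Gronwall estimate on $\EE\sup_{s\le t}\|u^1_s-u^2_s\|_H$ rather than invoking abstract PDMP well-posedness; your route is valid but less self-contained.
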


\subsubsection{Plan of the campaign.}\label{Campaign}

To make the proof of our main result easier to read, we will consider in a first step that all the states communicate at a fast rate. This is called the all-fast case. We will prove our main result in detail in this case and then give the key points for the validity of our proof in the general case. In the all-fast case there is a unique class $E$ and the generator has the simplest form:
\begin{equation}\label{gene}
\Ba f(u,r)=\sum_{i\in\ZZ}\sum_{\zeta\in E}[f(u,r(r(i)\to\zeta))-f(u,r)]\alpha_{r(i),\zeta}(u(\frac{i}{N}))
\end{equation}
We make the following assumptions: for any $i\in\ZZ$ with $u\in H$ fixed, the Markov process $r(i)$ has a unique stationary distribution $\mu(u(\frac{i}{N}))$. Then the process $(r(i),i\in\ZZ)$ has the following stationary distribution:
\begin{equation*}
\mu(u)=\bigotimes_{i\in\ZZ} \mu(u(\frac{i}{N}))
\end{equation*}
We assume that the rate functions are Lipschitz continuous. The average (generalized) function is then:
\begin{eqnarray}\label{F}
F(u)&=&\int_{\RR}G_r(u)\mu(u)(dr)\\
&=&\frac{1}{N}\sum_{\xi\in E}\sum_{i\in \ZZ}c_\xi \mu(u(\frac{i}{N}))(\xi)(v_\xi-u(\frac{i}{N}))\delta_{\frac{i}{N}}\nonumber
\end{eqnarray}
if $u$ is held fixed.

Is this hypothesis reasonable?  For the most classical model in neurosciences, the rate functions are indeed, as well as their derivatives, Lipschitz continuous. See for example the rate functions in classical Hodgkin-Huxley model \cite{HH} (see also the Appendix \ref{app}).\\
\\
Let us show briefly that in this case the ergodic measure $\mu$ is Lipschitz continuous in its argument. Let $i$ be in $\ZZ$ and the membrane potential at $\frac iN$ be fixed equal to $x\in\R$, the state space of the continuous time Markov chain $r(i)$ is $E$ which is a finite set. We set $|E|=m$. We can then enumerate the elements of $E$ : $E=\{\xi_1,\cdots,\xi_m\}$. The generator of $r(i)$ is then the matrix :
\[
J(x)=(\alpha_{\xi_i,\xi_j}(x))_{1\leq i,j\leq m}
\]
with $\alpha_{\xi_i,\xi_i}(x)=-\sum_{j\neq i} \alpha_{\xi_i,\xi_j}(x)$. Therefore, if we assumed that our continuous time Markov chain is ergodic, it has an invariant measure which is of the form:
\[
\mu(x)(\xi)=\frac{\sum \prod\alpha_{\xi,\xi'}(x)}{\sum \prod\alpha_{\zeta,\zeta'}(x)}
\]
The sums and products involved here are calculated over subsets of $E$ which are given by resolving the equation $\mu(x)^TJ=0$. $\mu$ is then Lipschitz continuous in $x$ since each $\alpha_{\xi,\zeta}$ is Lipschitz continuous in $x$ and bounded. 

The assumption of independence of each coordinate of the process $r$ over infinitesimal time-scales is important to have a simple expression of the invariant measure of the process $r$ and simplify the mathematics. It is possibly unrealistic from the biological point of view but is often assumed in mathematical neurosciences. 

The plan of the campaign to prove our main result (\ref{APDE}) in the all-fast case is the following. We want to prove the convergence in law of a family of c\`adl\`ag stochastic processes with values in a Hilbert space towards another, here deterministic, process. There is a well known strategy to do that:
\begin{itemize}
\item prove the tightness of the family
\item identify the limit
\end{itemize}
In our case, tightness (see subsection \ref{main_proof}) will follow roughly from the uniform boundedness of the family $(u^\e,\e\in]0,1])$ obtained in Proposition \ref{Prop_bound}. The identification of the limit will follow on one side from tightness and on the other side from some classical argument in the theory of averaging for stochastic processes, see for example \cite{PS}. We will introduce a Poisson equation which will enable us to control some problematic terms (see Proposition \ref{Prop_Poi}) and identify the limit (subsection \ref{main_proof}). We will then move to the general case with multiple classes (in  subsection \ref{Cas_g}) with an emphasis on each point which could give an issue and show how things work in this case.

\section{Proof of the main result}\label{section_proof}

\subsection{The all-fast case}\label{main_proof}

We want to prove Proposition \ref{prop_main}. Let us recall that in the all-fast case $F_{\bar{r}}$ in (\ref{id_lim}) reduces to $F$ in (\ref{F}). 

Let us outline the strategy of the proof following the plan of the campaign announced in Section \ref{Campaign}.
\begin{itemize}
\item Tightness.
\begin{enumerate}
\item Thanks to Proposition \ref{Prop_bound}, use the Markov inequality to prove that Aldous's condition and property (\ref{TH1}) of Theorems \ref{GCT} and \ref{TH} in Appendix \ref{Met} hold.
\item For $t\in[0,T]$ fixed, find the finite dimensional space $L_{\delta,\eta}$ of (\ref{TH2}), Theorem \ref{TH} Appendix \ref{Met} using a truncation of the solution $u^\e_t$ in the mild formulation. Bound $ \EE(d(u^\e_t,L_{\delta,\eta})>\eta)$ and use the Markov inequality.
\item Obtain tightness in $\mathbb{D}([0,T],H)$ by Theorems \ref{GCT} and \ref{TH} Appendix \ref{Met} and conclude that tightness in $\mathcal{C}([0,T],H)$ holds as well thanks to the regularity of the solution $u^\e$ for any $\e\in]0,1]$.
\end{enumerate}
\item  Identification of the limit.
\begin{enumerate}
\item Begin by showing that the following property is sufficient:
\[
\lim_{\e\to0}\EE\left|\int_0^t<G_{r^\e_s}(u^\e_s)-F(u^\e_s),\phi>ds\right|=0 \quad\forall\phi\in\mathcal{C}^2_0(I)
\]
To prove this property, use the tightness of the family $(u^\e,\e\in]0,1])$.
\item Replace the term $<G_{r^\e_s}(u^\e_s)-F(u^\e_s),\phi>$ by $\Ba f(u^\e_s,r^\e_s)$ where  $f$ is the solution of a Poisson equation. Show that this solution exists and has some nice properties.
\item Write the semi-martingale expansion of $\int_0^t<G_{r^\e_s}(u^\e_s)-F(u^\e_s),\phi>ds$. Bound its martingale part thanks to the bound for the bracket of the martingale.
\item Bound its finite variation part thanks to the nice properties of $f$ and the uniform bound  from Proposition \ref{Prop_bound}.
\item Aggregate all these bounds together to obtain that  $\EE\left|\int_0^t<G_{r^\e_s}(u^\e_s)-F(u^\e_s),\phi>ds\right|^2$ is bounded by a constant times $\e$.
\item Conclude that (\ref{id_lim}) holds and implies uniqueness up to indistinguishability.
\end{enumerate}
\end{itemize} 

\begin{proof}[Tightness]
STEP 1 (Property  (\ref{TH1}) and Aldous condition from Appendix \ref{Met}). We begin with the Aldous condition. Let $\tau$ be a stopping time and $\theta$ such that $\tau+\theta<T$. We need to control the size of $\|u^\e_{\tau+\theta}-u^\e_\tau\|_H$. At first we work deterministically. We break the difference $\|u^\e_{\tau+\theta}-u^\e_\tau\|^2_H$ into three pieces using the mild formulation of $u^\e$ in (\ref{mild}):
\begin{eqnarray*}
&~&\|u^\e_{\tau+\theta}-u^\e_\tau\|^2_H\\
&\leq&4\|e^{\Delta(\tau+\theta)}u_0-e^{\Delta\tau}u_0\|^2_H+4\|\int_\tau^{\tau+\theta}e^{\Delta(\tau+\theta-s)}G_{r^\e_s}(u^\e_s)ds\|^2_H\\
&+&4\|\int_0^\tau \left(e^{\Delta(\tau+\theta-s)}-e^{\Delta(\tau-s)}\right)G_{r^\e_s}(u^\e_s)ds\|^2_H
\end{eqnarray*}
Let us show now that the supremum over $\theta\in]0,\eta[$ of each term on the right hand side of the above inequality goes to $0$ with $\eta$  (we take $\eta>0$). We start with the third term:
\begin{eqnarray*}
&~&\|\int_0^{\tau}\left(e^{\Delta(\tau+\theta-s)}-e^{\Delta(\tau-s)}\right)G_{r^\e_s}(u^\e_s)ds\|^2_H
\end{eqnarray*}
Notice that:
\begin{eqnarray*}
&~&\left(e^{\Delta(\tau+\theta-s)}-e^{\Delta(\tau-s)}\right)G_{r^\e_s}(u^\e_s)\\
&=&\sum_{k\geq1}e^{-(k\pi)^2(\tau-s)}\left(e^{-(k\pi)^2\theta}-1\right)\frac{1}{N}\sum_{\xi\in E}\sum_{i\in \ZZ}c_\xi 1_{\xi}(r^\e_s(i))(v_\xi-u^\e_s(\frac{i}{N}))(1+(k\pi)^2)e_k\left({\frac{i}{N}}\right)e_k
\end{eqnarray*}
where we have used the fact that $<e_k,\delta_{\frac iN}>=(1+(k\pi)^2)e_k(\frac iN)$ ( recall that $e_k=\frac{\sqrt2}{\sqrt{1+(k\pi)^2}}\sin(k\pi\cdot)$). Thus we obtain:
\begin{eqnarray*}
&~&\|\int_0^{\tau}\left(e^{\Delta(\tau+\theta-s)}-e^{\Delta(\tau-s)}\right)G_{r^\e_s}(u^\e_s)ds\|^2_H\\
&\leq&4|E|^2\max_{\xi\in E} c^2_\xi(\max_{\xi\in E} v^2_\xi+C^2)\sum_{k\geq1}\left(\int_0^{\tau}e^{-(k\pi)^2(\tau-s)}\left(e^{-(k\pi)^2\theta}-1\right)\sqrt{1+(k\pi)^2}ds\right)^2\\
&=&4|E|^2\max_{\xi\in E} c^2_\xi(\max_{\xi\in E} v^2_\xi+C^2)\sum_{k\geq1}\left(e^{-(k\pi)^2\theta}-1\right)^2(1+(k\pi)^2)\frac{\left(1-e^{-(k\pi)^2\tau}\right)^2}{(k\pi)^4}\\
&\leq&4|E|^2\max_{\xi\in E} c^2_\xi(\max_{\xi\in E} v^2_\xi+C^2)\sum_{k\geq1}\left(e^{-(k\pi)^2\theta}-1\right)^2\frac{1+(k\pi)^2}{(k\pi)^4}
\end{eqnarray*}
where $C$ is the constant of Proposition \ref{Prop_bound}. By dominated convergence, we see that:
\[
\lim_{\eta\to0}\sup_{\theta\in]0,\eta[}\sum_{k\geq1}\left(e^{-(k\pi)^2\theta}-1\right)^2\frac{1+(k\pi)^2}{(k\pi)^4}\leq\lim_{\eta\to0}\sum_{k\geq1}\left(e^{-(k\pi)^2\eta}-1\right)^2\frac{1+(k\pi)^2}{(k\pi)^4}=0
\]
For the second term as before, we can show that:
\begin{eqnarray*}
&~&\|\int_\tau^{\tau+\theta}e^{\Delta(\tau+\theta-s)}G_{r^\e_s}(u^\e_s)ds\|^2_H\\
&\leq&4|E|^2\max_{\xi\in E} c^2_\xi(\max_{\xi\in E} v^2_\xi+C^2)\sum_{k\geq1}\left(\int_\tau^{\tau+\theta}e^{-(k\pi)^2(\tau+\theta-s)}\sqrt{1+(k\pi)^2}ds\right)^2\\
&\leq&4|E|^2\max_{\xi\in E} c^2_\xi(\max_{\xi\in E} v^2_\xi+C^2)\sum_{k\geq1}(1+(k\pi)^2)\frac{\left(1-e^{-(k\pi)^2\theta}\right)^2}{(k\pi)^4}
\end{eqnarray*}
By dominated convergence, we see that:
\[
\lim_{\eta\to0}\sup_{\theta\in]0,\eta[}\sum_{k\geq1}(1+(k\pi)^2)\frac{\left(1-e^{-(k\pi)^2\theta}\right)^2}{(k\pi)^4}\leq\lim_{\eta\to0}\sum_{k\geq1}(1+(k\pi)^2)\frac{\left(1-e^{-(k\pi)^2\eta}\right)^2}{(k\pi)^4}=0
\]
For the first term we have, by the Bessel-Parseval equality:
\begin{eqnarray*}
&~&\|e^{\Delta(\tau+\theta)}u_0-e^{\Delta\tau}u_0\|^2_H\\
&=&\|\sum_{k\geq1}e^{-(k\pi)^2\tau}(e^{-(k\pi)^2\theta}-1)(u_0,e_k)e_k\|^2_H\\
&=&\sum_{k\geq1}e^{-2(k\pi)^2\tau}(e^{-(k\pi)^2\theta}-1)^2(u_0,e_k)^2\\
&\leq&\sum_{k\geq1}(e^{-(k\pi)^2\theta}-1)^2(u_0,e_k)^2
\end{eqnarray*}
By dominated convergence, we see that:
\[
\lim_{\eta\to0}\sup_{\theta\in]0,\eta[}\sum_{k\geq1}(e^{-(k\pi)^2\theta}-1)^2(u_0,e_k)^2\leq\lim_{\eta\to0}\sum_{k\geq1}(e^{-(k\pi)^2\eta}-1)^2(u_0,e_k)^2=0
\]
Combining the results obtained for the three terms we have:
\[
\lim_{\eta\to0}\sup_{\theta\in]0,\eta[}\|u^\e_{\tau+\theta}-u^\e_\tau\|^2_H=0
\]
uniformly in $\e\in]0,1]$. Therefore, for all $M,\delta>0$, using the Chebyshev inequality, we can choose $\eta$ so small that:
\[
\sup_{\e\in]0,1]}\sup_{\theta\in]0,\eta[}\PP(\|u^\e_{\tau+\theta}-u^\e_\tau\|_H\geq M)\leq\sup_{\e\in]0,1]}\sup_{\theta\in]0,\eta[}\frac{\EE(\|u^\e_{\tau+\theta}-u^\e_\tau\|^2_H)}{M^2}<\delta
\]

The first condition (\ref{TH1}), Theorem \ref{TH}  Appendix \ref{Met} is verified by the application of the Markov inequality. Indeed, for any $t\in[0,T]$ and $\delta>0$ by Proposition \ref{Prop_bound} there exists a constant $C>0$ independent of $\e\in]0,1]$ and $t\in[0,T]$ such that:
\[
 \sup_{\e\in]0,1]} \PP(\|u^\e_t\|_{H}>\rho)\leq \frac{1}{\rho}\sup_{\e\in]0,1]}\EE\|u^\e_t\|_{H}\leq\frac{C}{\rho}<\delta
\]
for any $\rho>0$ large enough.

STEP 2 (Truncation). We have to show that for any $\delta,\eta>0$ we can find $\e_0>0$ and a space $L_{\delta,\eta}$ such that:
\[
\sup_{\e\in]0,\e_0]} \PP(d(u^\e_t,L_{\delta,\eta})>\eta)\leq\delta
\]
where $d(u^\e_t,L_{\delta,\eta})=\inf_{v\in L_{\delta,\eta}}\|u^\e_t-v\|_H$. We fix $t\in[0,T]$. Recalling the mild representation (\ref{mild}), we have:
\[
u^\e_t=e^{\Delta t}u_0+\int_0^t\frac{1}{N}\sum_{\xi\in E}\sum_{i\in \ZZ}c_\xi 1_{\xi}(r^\e_s(i))(v_\xi-u^\e_s(\frac{i}{N}))e^{\Delta(t-s)}\delta_{\frac{i}{N}}ds
\]
where, using the explicit expression of the semi-group $e^{\D t}$, $e^{\Delta(t-s)}\delta_{\frac{i}{N}}$ is equal to $\sum_{k\geq1}e^{-(k\pi)^2(t-s)}(1+(k\pi)^2)e_k(\frac{i}{N})e_k$. We define, for $f\in H$ and $x\in I$:
\begin{eqnarray*}
e^{\Delta t}_pf&:=&\sum_{k=1}^pe^{-(k\pi)^2t}(f,e_k)e_k\text{ and } e^{\Delta t}_p\delta_x:=\sum_{k=1}^pe^{-(k\pi)^2t}(1+(k\pi)^2)e_k(x)e_k,\\
u^{\e,p}_t&:=&e^{\Delta t}_pu_0+\int_0^t\frac{1}{N}\sum_{\xi\in E}\sum_{i\in \ZZ}c_\xi 1_{\xi}(r^\e_s(i))(v_\xi-u^\e_s(\frac{i}{N}))e^{\Delta(t-s)}_p\delta_{\frac{i}{N}}ds
\end{eqnarray*}
For any $\delta,\eta>0$ we can find $p$ independent of $\e$ such that $\|u^\e_t-u^{\e,p}_t\|_H\leq C'\eta\delta$ with the constant $C'$ deterministic and independent of $\delta,\eta$ and $\e\in]0,1]$. Indeed we can easily show, as in STEP 1, that:
\[
\|u^\e_t-u^{\e,p}_t\|^2_H\leq 2\|\sum_{k=p+1}^\infty e^{-(k\pi)^2t}(u_0,e_k)e_k\|^2_H+2C'\sum_{k=p+1}^\infty (1+(k\pi)^2)\frac{(1-e^{-(k\pi)^2t})^2}{(k\pi)^4}\\
\]
which is independent of $\e\in]0,1]$. The convergence of each series, uniformly in $\e$, enables us to choose a suitable $p$ independent of $\e$. Let us denote $L_p=span\{e_i,1\leq i\leq p\}$. We choose $ L_{\delta,\eta}=L_p$. Since $u^{\e,p}_t\in L_{\delta,\eta}=L_p$
\[
\EE(d(u^\e_t,L_{\delta,\eta}))\leq\EE(\|u^\e_t-u^{\e,p}_t\|_H)\leq C'\eta\delta
\]
Markov's inequality gives us :
\[
\PP(d(u^\e_t,L_{\delta,\eta})>\eta)\leq C'\delta
\]
with $\delta$ independent of $\e\in]0,1]$.

STEP 3 (Tightness). Therefore using Theorems \ref{GCT} and \ref{TH} of Appendix \ref{Met}, $(u^\e,\e\in]0,1])$ is tight in $\mathbb{D}([0,T],H)$. Since we know that for each $\e\in]0,1]$, $u^\e$ is in $\mathcal{C}([0,T],H)$ we have in fact that $(u^\e,\e\in]0,1])$ is tight in $\mathcal{C}([0,T],H)$.

\end{proof}
\begin{proof}[Identification of the limit]
STEP 1 (Reduction of the problem). We know that the family $(u^\e,\e\in]0,1])$ is tight in $\mathcal{C}([0,T],H)$. We still denote by $u^\e$ a converging sub-sequence of $u^\e$ and we denote by $u$ the corresponding accumulation point. We want to show that for all $\phi$ in $\mathcal{C}^2_0(I)$:
\[
(u_t,\phi)_{L^2(I)}=(u_0,\phi)_{L^2(I)}+\int_0^t (u_s,\phi'')_{L^2(I)} ds +\int_0^t<F(u_s),\phi>ds
\]
almost surely. We will show that:
\begin{equation}\label{id_weak}
\EE\left|(u_t,\phi)_{L^2(I)}-(u_0,\phi)_{L^2(I)}-\int_0^t (u_s,\phi'')_{L^2(I)} ds -\int_0^t<F(u_s),\phi>ds\right|=0
\end{equation}
Notice that:
\begin{eqnarray*}
&~&\EE\left|(u^\e_t,\phi)_{L^2(I)}-(u_0,\phi)_{L^2(I)}-\int_0^t (u^\e_s,\phi'')_{L^2(I)} ds -\int_0^t<F(u^\e_s),\phi>ds\right|\\
&=&\EE\left|\int_0^t<G_{r^\e_s}(u^\e_s)-F(u^\e_s),\phi>ds\right|
\end{eqnarray*}
Since the function:
\[
h(v)=\left|(v_t,\phi)_{L^2(I)}-(v_0,\phi)_{L^2(I)}-\int_0^t (v_s,\phi'')_{L^2(I)} ds -\int_0^t<F(v_s),\phi>ds\right|
\]
is continuous on $\mathcal{C}([0,T],H)$ and the arguments $(u^\e,\e\in]0,1])$ and $u$ are uniformly bounded in $\e$ in $\mathcal{C}([0,T],H)$, by convergence in law we have that:
\begin{eqnarray*}
&\lim_{\e\to0}&\EE\left|(u^\e_t,\phi)_{L^2(I)}-(u_0,\phi)_{L^2(I)}-\int_0^t (u^\e_s,\phi'')_{L^2(I)} ds -\int_0^t<F(u^\e_s),\phi>ds\right|\\
&=&\EE\left|(u_t,\phi)_{L^2(I)}-(u_0,\phi)_{L^2(I)}-\int_0^t (u_s,\phi'')_{L^2(I)} ds -\int_0^t<F(u_s),\phi>ds\right|
\end{eqnarray*}
Therefore (\ref{id_weak}) will follow if $\lim_{\e\to0}\EE\left|\int_0^t<G_{r^\e_s}(u^\e_s)-F(u^\e_s),\phi>ds\right|=0$. We will show more precisely that:
\[
\lim_{\e\to0}\EE\left| \int_0^t<G_{r^\e_s}(u^\e_s)-F(u^\e_s),\phi>ds\right|^2=0
\]
 
STEP 2 (Poisson equation). We introduce the following Poisson equation on $f : H\times\RR\to\R$:
\begin{eqnarray}\label{Poisson}
\mathcal{B}f(u,r)&=&<G_{r}(u)-F(u),\phi>\\
\int_\mathcal{R} f(u,r)\mu(u)(dr)&=&0\nonumber
\end{eqnarray}
with $\mathcal{B}$ the generator given by equation (\ref{gene}):
\begin{equation*}
\Ba f(u,r)=\sum_{i\in\ZZ}\sum_{\zeta\in E}[f(u,r(r(i)\to\zeta))-f(u,r)]\alpha_{r(i),\zeta}(u(\frac{i}{N}))
\end{equation*}
\begin{proposition}\label{Prop_Poi}
The Poisson equation (\ref{Poisson}) has a unique solution $f$ which is measurable and locally Lipschitz continuous in its first variable with respect to the $\|\cdot\|_H$ norm. For all fixed $r\in\RR$, the map $u\in H\mapsto f(u,r)$ has a Fr\'echet derivative denoted by $\frac{df}{du}[u,r]$. Moreover $\sup_{s\in[0,T]}|f(u^\e_s,r^\e_s)|$ and $\sup_{s\in[0,T]}\left\|\frac{df}{du}[u^\e_s,r^\e_s]\right\|_{H^*}$ are both bounded almost surely by  constants independent of $\e$.
\end{proposition}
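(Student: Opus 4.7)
The plan is to exploit the product structure of the invariant measure $\mu(u)=\bigotimes_{i\in\ZZ}\mu(u(i/N))$ and the site-wise additivity of both the generator $\Ba$ and of $\langle G_r(u)-F(u),\phi\rangle$. Setting $g_i(\xi,y)=c_\xi(v_\xi-y)\phi(i/N)$ and $\bar g_i(y)=\sum_{\xi\in E} g_i(\xi,y)\mu(y)(\xi)$, a direct computation yields
\begin{equation*}
\langle G_r(u)-F(u),\phi\rangle = \frac{1}{N}\sum_{i\in\ZZ}\bigl(g_i(r(i),u(i/N))-\bar g_i(u(i/N))\bigr).
\end{equation*}
I would seek $f$ in the additive form $f(u,r)=\sum_{i\in\ZZ} f_i(u(i/N),r(i))$. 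Since $\Ba$ flips only one coordinate at a time, the Poisson equation (\ref{Poisson}) decouples into $|\ZZ|$ finite-dimensional equations, one per site:
\begin{equation*}
\Ba(y) f_i(y,\cdot)(\xi) = \tfrac{1}{N}\bigl(g_i(\xi,y)-\bar g_i(y)\bigr),\qquad \sum_{\xi\in E}f_i(y,\xi)\mu(y)(\xi)=0,
\end{equation*}
where $\Ba(y)$ is the one-site generator on $E$ at $y=u(i/N)$. The irreducibility assumption together with the lower bound $\alpha_-$ on nonzero rates ensures that $\Ba(y)$ has zero as a simple eigenvalue with eigenspace spanned by constants, and its range is the $\mu(y)$-centered subspace. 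Since the right-hand side is $\mu(y)$-centered by construction, the Fredholm alternative gives a unique centered solution $f_i(y,\cdot)$; uniqueness of $f$ under the normalization $\int_\RR f(u,r)\mu(u)(dr)=0$ follows from uniqueness at each site.

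For regularity in $u$, I would write $f_i(y,\cdot)=-\Ba(y)^{\dagger}\bigl(g_i(\cdot,y)-\bar g_i(y)\bigr)/N$ using the pseudo-inverse restricted to the mean-zero subspace. Smoothness of the $\alpha_{\xi,\zeta}$ and Lipschitz continuity of $\mu(y)$ (both established in Section \ref{Campaign}) make $y\mapsto \Ba(y)$ of class $C^1$; on bounded intervals this matrix is invertible uniformly on the mean-zero subspace, so $y\mapsto \Ba(y)^{\dagger}$ is $C^1$ with locally Lipschitz derivative. Since $g_i$ is affine in $y$, this yields $y\mapsto f_i(y,\xi)$ continuously differentiable and locally Lipschitz. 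To lift this to $H$, observe that $u\mapsto u(i/N)$ is a bounded linear form on $H$ with operator norm at most $C_P$; composition with $f_i$ produces Fr\'echet differentiability, giving
\begin{equation*}
\tfrac{df}{du}[u,r](h)=\sum_{i\in\ZZ}\partial_y f_i(u(i/N),r(i))\,h(i/N),
\end{equation*}
whose $H^*$-norm is bounded by $C_P\sum_{i\in\ZZ}|\partial_y f_i(u(i/N),r(i))|$. Measurability in $(u,r)$ and local Lipschitz regularity in $u$ follow from this explicit expression together with the Lipschitz regularity of each $\partial_y f_i$.

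The uniform bounds are a direct consequence of Proposition \ref{Prop_bound}: since $\sup_{s\in[0,T]}\|u^\e_s\|_H\le C$ and $|u^\e_s(i/N)|\le C_P\|u^\e_s\|_H\le C_PC$ uniformly in $\e\in]0,1]$, the finitely many continuous functions $y\mapsto f_i(y,\xi)$ and $y\mapsto \partial_y f_i(y,\xi)$, indexed by the finite set $(i,\xi)\in\ZZ\times E$, are bounded on the compact interval $[-C_PC,C_PC]$ by a deterministic constant, yielding the required $\e$-uniform almost sure bounds on $|f(u^\e_s,r^\e_s)|$ and $\|df/du[u^\e_s,r^\e_s]\|_{H^*}$. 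The main technical obstacle is the regularity step: verifying that $\Ba(y)^{\dagger}$ is $C^1$ with locally Lipschitz derivative requires a uniform spectral gap for $\Ba(y)$ restricted to the mean-zero subspace on the compact interval of admissible potentials, which I would obtain from the quantitative lower bound $\alpha_-$, the fixed irreducible communication graph on $E$, and a compactness argument transferring pointwise positivity of the spectral gap to a uniform lower bound.
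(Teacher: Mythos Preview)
Your argument is correct and in fact more explicit than the paper's. The essential difference is the level at which the Fredholm alternative is invoked. The paper treats $\Ba(u,\cdot)$ as a single operator on the full finite-dimensional space $\R^{\RR}$, observes that by ergodicity $\ker(\Ba^*(u,\cdot))=\mathrm{span}(\mu(u))$, and applies the Fredholm alternative once to obtain existence of $f(u,\cdot)$; regularity of $f$ in $u$ is then asserted to follow from the boundedness, Lipschitz and Fr\'echet-differentiability properties of the right-hand side $\langle G_r(u)-F(u),\phi\rangle$ (established in a preparatory lemma) together with the analogous properties of the rate functions $u\mapsto\alpha_{r(i),\zeta}(u(i/N))$, without writing down a formula for $f$. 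You instead exploit the product/additive structure to decouple the problem into $|\ZZ|$ one-site equations on $\R^E$, solve each via the one-site pseudo-inverse $\Ba(y)^\dagger$, and read off the regularity of $f$ from the $C^1$ dependence of $\Ba(y)^\dagger$ on $y$ combined with the bounded linearity of $u\mapsto u(i/N)$ on $H$. Your route yields an explicit formula for $f$ and for its Fr\'echet derivative, which makes the uniform bounds transparent (they reduce to bounding finitely many scalar functions on a compact interval); the paper's route is shorter to state but leaves the passage from regularity of the data to regularity of the solution implicit. Both rely on the same two ingredients, namely the Fredholm alternative for an ergodic finite-state generator and the uniform bound of Proposition~\ref{Prop_bound}, so neither is more general; yours is simply more constructive.
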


We begin the proof of the above Proposition by the following Lemma.
\begin{lemma}\label{lem_Poi}
Let $B\subset H$ be a bounded domain and $r$ be in $\RR$. The map $u\in H\mapsto <G_{r}(u)-F(u),\phi>$ is:
\begin{enumerate}
\item bounded on $B$ by a constant independent of $r$.
\item Lipschitz continuous on $B$ with Lipschitz constant independent of $r$.
\item Fr\'echet differentiable on $B$ with Fr\'echet derivative in $u\in H$ denoted by $\frac{df}{du}[u,r]$. Moreover the Fr\'echet derivative is bounded in $\|\cdot\|_{H^*}$ norm uniformly in $u\in B$ and $r\in\RR$. 
\end{enumerate}
\end{lemma}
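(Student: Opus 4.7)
The plan is to exploit the fact that, once tested against $\phi\in\mathcal{C}^2_0(I)$, the $H^*$-valued maps $G_r$ and $F$ collapse to finite sums of products of very simple factors. Writing out the duality pairing I would first note that
\begin{equation*}
<G_r(u)-F(u),\phi>=\frac{1}{N}\sum_{\xi\in E}\sum_{i\in\ZZ} c_\xi\bigl[1_\xi(r(i))-\mu(u(\tfrac{i}{N}))(\xi)\bigr]\bigl(v_\xi-u(\tfrac{i}{N})\bigr)\phi(\tfrac{i}{N})
\end{equation*}
so that the analysis reduces to controlling, for each pair $(\xi,i)$, the three scalar factors $1_\xi(r(i))-\mu(u(\tfrac{i}{N}))(\xi)$, $v_\xi-u(\tfrac{i}{N})$, and $\phi(\tfrac{i}{N})$ uniformly in $u\in B$ and $r\in\RR$. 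The Poincaré inequality $|u(\tfrac{i}{N})|\leq C_P\|u\|_H$ immediately confines the real values $u(\tfrac{i}{N})$ to a bounded interval as $u$ ranges over $B$.

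For (1), I would simply use $|1_\xi(r(i))|\leq 1$, $|\mu(u(\tfrac{i}{N}))(\xi)|\leq 1$, boundedness of $u(\tfrac{i}{N})$ on $B$, and the fact that the double sum has $|E|\cdot|\ZZ|$ terms. For (2), the linear evaluation $u\mapsto u(\tfrac{i}{N})$ is $C_P$-Lipschitz from $H$ to $\R$, hence so is $u\mapsto v_\xi-u(\tfrac{i}{N})$; by the explicit rational formula for $\mu(x)(\xi)$ given in subsection \ref{Campaign} together with the assumptions that each $\alpha_{\xi,\zeta}$ is Lipschitz and that non-zero rates are bounded below by $\alpha_-$, the scalar map $x\mapsto \mu(x)(\xi)$ is Lipschitz on bounded sets; composition with the evaluation gives Lipschitz dependence in $u\in B$. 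A product of uniformly bounded Lipschitz functions being Lipschitz, and the Lipschitz constants obtained this way being independent of $r$ (only $1_\xi(r(i))\in\{0,1\}$ depends on $r$, and enters linearly with coefficient bounded by $1$), the total Lipschitz constant is independent of $r$.

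For (3), I would verify Fréchet differentiability term by term. The evaluation $u\mapsto u(\tfrac{i}{N})$ is continuous linear, hence Fréchet differentiable with derivative $h\mapsto h(\tfrac{i}{N})$. The assumption that the rate functions $\alpha_{\xi,\zeta}$ have Lipschitz derivatives makes them $C^1$, and the rational expression for $\mu(x)(\xi)$ combined with the lower bound on the denominator yields that $x\mapsto\mu(x)(\xi)$ is $C^1$ with derivative bounded on any bounded set of $\R$. Chain rule then gives Fréchet differentiability of $u\mapsto\mu(u(\tfrac{i}{N}))(\xi)$ with derivative $h\mapsto \mu'(u(\tfrac{i}{N}))(\xi)\,h(\tfrac{i}{N})$. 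Stability of Fréchet differentiation under sums and products (recalled in the basic notations section) then gives Fréchet differentiability of the full map and, after collecting the product-rule terms, an expression for $\frac{df}{du}[u,r](h)$ as a finite sum of quantities of the form $\kappa(u,r,i,\xi)\cdot h(\tfrac{i}{N})$ where the scalar coefficients $\kappa$ are uniformly bounded in $u\in B$ and $r\in\RR$ by the estimates established in (1). Finally, $|h(\tfrac{i}{N})|\leq C_P\|h\|_H$ forces
\begin{equation*}
\Bigl|\tfrac{df}{du}[u,r](h)\Bigr|\leq C\|h\|_H,\qquad u\in B,\ r\in\RR,\ h\in H,
\end{equation*}
i.e.\ a uniform bound for $\|\tfrac{df}{du}[u,r]\|_{H^*}$, which is exactly what (3) asks for.

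The only non-routine step is the $C^1$ regularity (with bounded derivative on bounded intervals) of $x\mapsto\mu(x)(\xi)$; I expect that to be the sole real obstacle, and it is handled by the explicit rational formula together with the uniform lower bound $\alpha_-$ on non-zero rates that keeps the denominator bounded away from zero.
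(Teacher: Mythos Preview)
Your proposal is correct and follows essentially the same route as the paper: you write out the same explicit finite sum for $<G_r(u)-F(u),\phi>$, deduce (1) and (2) from boundedness and Lipschitz continuity of $x\mapsto\mu(x)(\xi)$ combined with the Poincar\'e estimate on evaluations, and for (3) you obtain by product/chain rule the same Fr\'echet derivative as the paper (a finite sum of coefficients times $h(\tfrac iN)$), then bound its $H^*$-norm via $|h(\tfrac iN)|\leq C_P\|h\|_H$. Your write-up simply makes explicit the steps the paper leaves implicit, in particular the use of the Poincar\'e inequality and the derivation of the regularity of $\mu$ from the rational formula and the lower bound $\alpha_-$.
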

\begin{proof}
Recall that:
\[
<G_{r}(u)-F(u),\phi>=\frac{1}{N}\sum_{\xi\in E}\sum_{i\in \ZZ}c_\xi (1_{\xi}(r(i))-\mu(u(\frac iN))(\xi))(v_\xi-u(\frac{i}{N}))\phi(\frac{i}{N})
\]
Since for $\xi\in E$, $\mu(\cdot)(\xi)$ is bounded and Lipschitz continuous on $\R$, the two first points follow. For the third point we note that the map $u\in H\mapsto <G_{r}(u)-F(u),\phi>$ is a linear combination of the Fr\'echet differentiable functions $u\in H\mapsto \mu\left(u\left(\frac iN\right)\right)(\xi)$, $u\mapsto u\left(\frac iN\right)$ and of the product of these two functions. The Fr\'echet derivative in $u\in H$ against $h\in H$ is given by:
\[
-\frac{1}{N}\sum_{\xi\in E}\sum_{i\in \ZZ}c_\xi [\mu'(u(\frac iN))(\xi)(v_\xi-u(\frac{i}{N}))+ (1_{\xi}(r(i))-\mu(u(\frac iN))(\xi))]\phi(\frac{i}{N})h(\frac iN)
\]
which gives us the boundedness property uniformly in $r\in\RR$ and $u\in B$.
\end{proof}

\begin{proof}[Proof of Proposition \ref{Prop_Poi}]
For $u\in H$ fixed, $\Ba(u,\cdot)$ is an operator on $\R^\RR$ which is a space of finite dimension. The Fredholm alternative in such a space is $\text{Im}(\Ba)=(\text{ker}(\Ba^*))^\perp$. Moreover, by the ergodicity assumption, for any fixed $u\in H$, $\text{ker}(\Ba^*(u,\cdot))=\text{span}(\mu(u))$. Therefore equation (\ref{Poisson}) with $u\in H$ fixed has a solution if and only if:
\begin{equation*}
\int_\RR\mu(u)(dr)<G_{r}(u)-F(u),\phi>=0
\end{equation*}
This latter equality holds by the definition of $F$ and the bi-linearity of the duality product $<\cdot,\cdot>$. Moreover, we can then always choose $f$ satisfying $\int_\mathcal{R} f(u,r)\mu(u)(dr)=0$ by taking its projection on $(\text{ker}(\Ba^*))^\perp$. Thus we have a solution $f(u,\cdot)$ for any $u\in H$ fixed. Uniqueness of $f$ follows easily from the condition $\int_\mathcal{R} f(u,r)\mu(u)(dr)=0$.\\
 Recall that for all $\e\in]0,1]$, $u^\e\in B$ where $B=\{u\in H; \|u\|\leq C\}$ where $C$ is the deterministic constant given by Proposition \ref{Prop_bound}. Then the desired properties of $f$ follow from Lemma \ref{lem_Poi} and the fact that for a bounded domain $B\in H$, each function $u\in B\mapsto \alpha_{r(i),\zeta}(u(\frac iN))$ is bounded below and above  by strictly positive constants, Lipschitz continuous and Fr\'echet differentiable with Fr\'echet derivative uniformly bounded in $u$, $\zeta$ and $r$. 
\end{proof}

STEP 3 (Bound the martingale part). Our previous result implies that $f\in\mathcal{D}(\mathcal{A})$ and:
\begin{equation}\label{mart}
M^\e_t=f(u^\e_t,r^\e_t)-f(u_0,r_0)-\int_0^t\mathcal{A}f(u^\e_s,r^\e_s)ds
\end{equation}
defines a square-integrable martingale, see for example Ethier and Kurtz \cite{Eth_Kur}, chapter 4 Proposition 1.7.

\begin{proposition}\label{Prop_Var}
We have :
\begin{equation}\label{crochet}
<M^\e>_t=\frac{1}{\e}\int_0^t\sum_{i\in\ZZ}\sum_{\zeta\neq r_s(i)}[f(u^\e_s,r_s(r_s(i)\to\zeta))-f(u^\e_s,r^\e_s)]^2\alpha_{r_s(i),\zeta}(u^\e_s(\frac{i}{N}))ds
\end{equation}
\end{proposition}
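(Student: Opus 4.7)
The plan is to identify $\langle M^\e\rangle_t$ through the carré du champ identity
\[
\langle M^\e\rangle_t \;=\; \int_0^t \Gamma^\e f(u^\e_s,r^\e_s)\,ds,\qquad \Gamma^\e f \;:=\; \mathcal A(f^2) - 2f\,\mathcal A f,
\]
and then to exploit the fact that the continuous part of $\mathcal A$ satisfies the Leibniz rule while the pure jump part collapses into a sum of squared jump sizes via $a^2-b^2-2b(a-b)=(a-b)^2$.

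First I would verify that $f^2$ lies in $\mathcal D(\mathcal A)$. By Proposition \ref{Prop_Poi}, $f$ is almost surely uniformly bounded on the trajectories of $(u^\e,r^\e)$ and, for each fixed $r$, Fréchet differentiable in $u$ with derivative uniformly bounded in $H^*$. The chain rule transfers these properties to $f^2$; in particular $t\mapsto f^2(\psi_r(t,x),r)$ is absolutely continuous, so the characterization of $\mathcal D(\mathcal A)$ given in Theorem \ref{th_BR} applies. Consequently both $f$ and $f^2$ yield Dynkin martingales as in (\ref{mart}), and the bounds from Proposition \ref{Prop_bound} together with Proposition \ref{Prop_Poi} make both of them square-integrable. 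The standard identity for the square of a Dynkin martingale then shows that $(M^\e_t)^2 - \int_0^t \Gamma^\e f(u^\e_s,r^\e_s)\,ds$ is itself a martingale, which identifies $\langle M^\e\rangle_t$ with $\int_0^t \Gamma^\e f(u^\e_s,r^\e_s)\,ds$.

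Next I would compute $\Gamma^\e f$ using the decomposition $\mathcal A = \tfrac{d}{dt}+\tfrac{1}{\e}\mathcal B$ of the all-fast generator. Along the deterministic flow between jumps, $\tfrac{d}{dt}(f^2) = 2f\,\tfrac{d}{dt}f$, so the continuous part of $\Gamma^\e f$ vanishes and only the jump contribution survives. Expanding
\[
\mathcal B(f^2)(u,r) - 2f(u,r)\,\mathcal B f(u,r) \;=\; \sum_{i\in\ZZ}\sum_{\zeta\in E}\bigl[f(u,r')^2 - f(u,r)^2 - 2f(u,r)\bigl(f(u,r')-f(u,r)\bigr)\bigr]\,\alpha_{r(i),\zeta}\!\left(u\!\left(\tfrac{i}{N}\right)\right),
\]
with $r' = r(r(i)\to\zeta)$, the bracket collapses to $\bigl(f(u,r')-f(u,r)\bigr)^2$. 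Since this quantity vanishes for $\zeta=r(i)$, the inner sum may be restricted to $\zeta\neq r(i)$, and dividing by $\e$ produces exactly the right-hand side of (\ref{crochet}).

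The main obstacle is the bookkeeping connecting the extended-generator identity to the predictable bracket $\langle M^\e\rangle$: one must check that $M^\e$ has no continuous martingale component, so that $\langle M^\e\rangle$ coincides with the predictable compensator of the purely discontinuous optional bracket $[M^\e]_t = \sum_{s\le t}(\Delta f(u^\e_s,r^\e_s))^2$. An equivalent route, which sidesteps the direct manipulation of $\Gamma^\e f$, is to recognize $M^\e$ as the compensated sum of the jumps of $f(u^\e,r^\e)$ and to apply the Lévy-system formula for the marked point process of transitions of $r^\e$, whose predictable intensity in the two-time-scale regime is $\tfrac{1}{\e}\alpha_{r^\e_{s-}(i),\zeta}(u^\e_{s-}(i/N))\,ds$; this yields the same expression and thereby confirms (\ref{crochet}).
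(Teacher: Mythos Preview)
Your proposal is correct and follows essentially the same approach as the paper, which simply invokes the comparison of the It\^o and Dynkin decompositions of $f^2(u^\e_t,r^\e_t)$ and the uniqueness of the Doob--Meyer decomposition (citing \cite{Eth_Kur}, Chapter~1, Problem~29). Your carr\'e du champ computation $\Gamma^\e f=\mathcal A(f^2)-2f\mathcal A f$ is exactly the algebraic content of that comparison, and you have spelled out explicitly the cancellation of the drift part and the collapse of the jump part via $a^2-b^2-2b(a-b)=(a-b)^2$, which the paper leaves implicit.
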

\begin{proof}
It is a classical proof in stochastic calculus (see for example \cite{Eth_Kur}, Chapter 1, Problem 29). The It\^o and the Dynkin formulas give two distinct decompositions of the squared process $f^2(u^\e_t,r^\e_t)_{t\geq0}$ in a semi-martingale. The uniqueness of the Doob-Meyer decomposition of a semi-martingale enables us to identify the bracket of our martingale.
\end{proof}
We have the following semi-martingale decomposition:
\begin{equation}\label{dec_sm}
\int_0^t<G_{r^\e_s}(u^\e_s)-F(u^\e_s),\phi>ds=\e f(u^\e_t,r^\e_t)-\e f(u_0,r_0)-\e\int_0^t\frac{d}{ds}f(u^\e_s,r^\e_s)ds-\e M^\e_t
\end{equation}
Recall that we are interested in bounding $\EE\left| \int_0^t<G_{r^\e_s}(u^\e_s)-F(u^\e_s),\phi>ds\right|^2$. Since $\sup_{s\in[0,T]}|f(u^\e_s,r^\e_s)|$ is bounded independently of $\e\in]0,1]$, denoting by $\e g$ one of the two first terms  of (\ref{dec_sm}) we have $\EE|\e g|^2\leq C' \e^2$ where $C'$ is a constant independent of $\e\in]0,1]$. For the martingale term, by the It\^o isometry:
\[
\EE|M^\e_t|^2=\EE<M^\e>_t\leq\frac{1}{\e} C'
\]
since $\e<M^\e>_t$ is bounded uniformly in $t\in[0,T]$ and $\e\in]0,1]$ thanks to the bounds on $f$ and each $\alpha_{\xi,\zeta}$. More precisely we have:
\begin{eqnarray*}
&~&<M^\e>_t\\
&=&\frac{1}{\e}\int_0^t\sum_{i\in\ZZ}\sum_{\zeta\neq r_s(i)}[f(u^\e_s,r_s(r_s(i)\to\zeta))-f(u^\e_s,r^\e_s)]^2\alpha_{r_s(i),\zeta}(u^\e_s(\frac{i}{N}))ds\\
&\leq&\frac{4T}{\e}\alpha^+N|E|\sup_{s\in[0,T]}|f(u^\e_s,r^\e_s)|^2
\end{eqnarray*}

STEP 4 (Bound the finite variation part). It remains to bound the third term of our semi-martingale decomposition.
\begin{proposition}\label{Prop_Var}
There exists a constant $C$ independent of $\e\in]0,1]$ such that: 
\begin{equation*}
\int_0^T|\frac{d}{dt}f(u^\e_t,r^\e_t)|dt\leq C
\end{equation*}
almost surely.
\end{proposition}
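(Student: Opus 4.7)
The natural plan is to establish a pointwise-in-time bound on $|\frac{d}{dt}f(u^\e_t, r^\e_t)|$, uniform in $t \in [0,T]$ and in $\e \in ]0,1]$, from which the stated $L^1$-in-time estimate follows immediately upon integration. The essential tool is the chain rule applied through the Fréchet derivative that Proposition \ref{Prop_Poi} provides, combined with the fact that $\partial_t u^\e_t$ belongs to $H^*$ with uniformly controlled norm.

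I would first write, between consecutive jumps of $r^\e$ (so that $r^\e_t = r$ is held fixed along the deterministic flow),
\[
\frac{d}{dt}f(u^\e_t, r^\e_t) = \Bigl\langle \partial_t u^\e_t,\, \frac{df}{du}[u^\e_t, r^\e_t]\Bigr\rangle,
\]
where $\partial_t u^\e_t = \Delta u^\e_t + G_{r^\e_t}(u^\e_t)$ lies in $H^*$ by the PDE dynamics, and the bracket is the $H^*$--$H$ duality pairing, with the Fréchet derivative identified with its Riesz representative in $H$. Cauchy--Schwarz in this duality then yields
\[
\Bigl|\frac{d}{dt}f(u^\e_t, r^\e_t)\Bigr| \leq \|\partial_t u^\e_t\|_{H^*}\,\Bigl\|\frac{df}{du}[u^\e_t, r^\e_t]\Bigr\|_{H^*},
\]
whose second factor is uniformly bounded directly by Proposition \ref{Prop_Poi}.

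To control $\|\partial_t u^\e_t\|_{H^*}$ I split as $\|\Delta u^\e_t\|_{H^*} + \|G_{r^\e_t}(u^\e_t)\|_{H^*}$. Integration by parts gives $|\langle\Delta u^\e_t, h\rangle| = |(\nabla u^\e_t, \nabla h)_{L^2(I)}| \leq \|u^\e_t\|_H \|h\|_H$, so $\|\Delta u^\e_t\|_{H^*} \leq \|u^\e_t\|_H$, which is uniformly controlled by Proposition \ref{Prop_bound}. The second term is a finite linear combination of Dirac masses $\delta_{i/N}$, each with $H^*$-norm at most $C_P$, whose coefficients $c_\xi(v_\xi - u^\e_t(i/N))/N$ are bounded via Proposition \ref{Prop_bound} combined with the Sobolev-Poincaré estimate $|u^\e_t(i/N)| \leq C_P \|u^\e_t\|_H$. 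Integrating the resulting pointwise estimate over $[0,T]$ yields the claim with constant $CT$ independent of $\e$.

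The main obstacle is the rigorous justification of the chain rule in the first step: because $\partial_t u^\e_t$ lies only in $H^*$ (not in $H$) while the Fréchet derivative is a functional on $H$, the naive chain rule does not apply directly and the composition has to be understood carefully through the Riesz identification. Making this fully rigorous would likely proceed via an approximation argument --- for instance a Galerkin truncation, where the chain rule holds classically --- and then passing to the limit using the uniform bounds above together with the smoothing properties of the semigroup $(e^{\Delta t})_{t > 0}$ inherited from the mild formulation (\ref{mild}).
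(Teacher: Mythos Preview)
Your proposal is correct and follows essentially the same approach as the paper: express $\frac{d}{dt}f(u^\e_t,r^\e_t)$ as the duality pairing $\langle \Delta u^\e_t + G_{r^\e_t}(u^\e_t),\, f_u(u^\e_t,r^\e_t)\rangle$, bound the Fr\'echet derivative via Proposition~\ref{Prop_Poi}, and bound $\|\Delta u^\e_t + G_{r^\e_t}(u^\e_t)\|_{H^*}$ using integration by parts together with Proposition~\ref{Prop_bound} and the Poincar\'e embedding. The only notable difference is that the paper disposes of the chain-rule justification you flag as an obstacle simply by invoking Theorem~4 of \cite{Buck_Ried}, rather than via a Galerkin approximation; so no additional argument is needed there.
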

\begin{proof}
Recall first in Theorem \ref{th_BR} the meaning of the notation $\frac{d}{dt}f(u^\e_t,r^\e_t)$. Recall that the map $u\mapsto f(u,r)$ for $r\in\RR$ fixed, is Fr\'echet differentiable with Fr\'echet derivative in $u$ denoted by $\frac{df}{du}[u,r]$ which is a bounded linear form on $H$. By the Riesz representation theorem there exists $f_u(u,r)\in H$ such that:
\[
\frac{df}{du}[u,r](h)=(f_u(u,r),h)_H\quad\forall h\in H
\] 
Moreover the correspondence is isometric: $\left\|\frac{df}{du}[u,r]\right\|_{H^*}=\left\|f_u(u,r)\right\|_H$. We then have (see Theorem 4 of \cite{Buck_Ried}):
\[
\frac{d}{dt}f(u^\e_t,r^\e_t)=<\Delta u^\e_t+G_{r^\e_t}(u^\e_t),f_u(u^\e_t,r^\e_t)>
\]
By Proposition \ref{Prop_Poi}, there exists a constant $C_1$ independent of $\e\in]0,1]$ and $t\in[0,T]$ such that:
\[
\left\|f_u(u^\e_t,r^\e_t)\right\|_H=\left\|\frac{df}{du}[u^\e_t,r^\e_t]\right\|_{H^*}\leq C_1
\]
Therefore:
\[
|\frac{d}{dt}f(u^\e_t,r^\e_t)|\leq C_1\|\Delta u^\e_t+G_{r^\e_t}(u^\e_t)\|_{H^*}\\
\]
It remains to show that $\|\Delta u^\e_t+G_{r^\e_t}(u^\e_t)\|_{H^*}$ is bounded uniformly in $\e$ and $t\in[0,T]$. For $\phi\in H$ we have, denoting by $D$ the derivative with respect to $x$:
\begin{eqnarray*}
&~&|<\Delta u^\e_t+G_{r^\e_t}(u^\e_t),\phi>|\\
&=&\left|-<D u^\e_t,D\phi>+\frac{1}{N}\sum_{\xi\in E}\sum_{i\in \ZZ}c_\xi 1_{\xi}(r(i))(v_\xi-u^\e_t(\frac{i}{N}))\phi\left({\frac{i}{N}}\right)\right|\\
&\leq&|(D u^\e_t,D\phi)_{L^2(I)}|+|E|\max_{\xi\in E}c_{\xi}(\max_{\xi\in E}|v_\xi|+C)C_P\|\phi\|_H\\
&\leq&\|Du^\e_t\|_{L^2(I)}\|D\phi\|_{L^2(I)}+|E|\max_{\xi\in E}c_{\xi}(\max_{\xi\in E}|v_\xi|+C)C_P\|\phi\|_H\\
&\leq&\|u^\e_t\|_H\|\phi\|_H+|E|\max_{\xi\in E}c_{\xi}(\max_{\xi\in E}|v_\xi|+C)C_P\|\phi\|_H\\
&\leq&(C+|E|\max_{\xi\in E}c_{\xi}(\max_{\xi\in E}|v_\xi|+C)C_P)\|\phi\|_H
\end{eqnarray*}
where $C$ is the deterministic constant given by Proposition \ref{Prop_bound}. Thus:
\[
\|\Delta u^\e_t+G_{r^\e_t}(u^\e_t)\|_{H^*}\leq C+|E|\max_{\xi\in E}c_{\xi}(\max_{\xi\in E}|v_\xi|+C)C_P
\]
This ends the proof.
\end{proof}

STEP 5 (All bounds together). Assembling all the bounds of the different terms we see that:
\[
\EE\left| \int_0^t<G_{r^\e_s}(u^\e_s)-F(u^\e_s),\phi>ds\right|^2\leq C\e
\]
with the constant $C$ independent of $\e\in]0,1]$. It remains to let $\e$ go to $0$ to conclude the proof of the identification of the limit.

STEP 6 (Uniqueness). We consider $u^1$ and $u^2$ two possible accumulation points verifying, for $i=1,2$:
\[
u^i_t=e^{\D t}u_0+\int_0^t e^{\D(t-s)}F(u^i_s)ds
\]
for all $t\in[0,T]$ almost surely. We can show easily that for any $t\in[0,T]$:
\[
\EE(\sup_{s\in[0,t]}\|u^1_s-u^2_s\|_H)\leq C\int_0^t \EE(\sup_{l\in[0,s]}\|u^1_l-u^2_l\|_H)dl
\]
with the constant $C$ independent of $t\in[0,T]$. Therefore,  by application of the Gronwall's lemma we obtain that $\EE(\sup_{t\in[0,T]}\|u^1_t-u^2_t\|_H)=0$.
\end{proof}

\subsection{The general case}\label{Cas_g}

We now consider the original case with different state classes $E_1,\cdots, E_l$. The first thing to do is to prove the tightness of the family $((u^\e,\bar{r}^\e),\e\in]0,1])$ in $\mathbb{D}([0,T],H\times\{1,\cdots,l\}^{|\ZZ|})$ (see section \ref{MAAE} for the definition of $\bar{r}^\e$). We apply a comparison argument. We notice that:
\[
\PP(r^\e_{t+h}(i)=\zeta|r^\e_t(i)=\xi)=\left\{\begin{array}{cc}
\frac1{\e}\alpha_{\xi,\zeta}\left(u_t\left(\frac{i}{N}\right)\right)h+o(h)&\text{ if $\xi,\zeta$ are in the same class $E_k$}\\
\alpha_{\xi,\zeta}\left(u_t\left(\frac{i}{N}\right)\right)h+o(h)&\text{ otherwise}
\end{array}
\right.
\]
For $\xi,\zeta\in E$ we set:
\[
\lambda_{\xi,\zeta}=\left\{\begin{array}{cc}
\frac1{\e}\alpha^+&\text{ if $\xi,\zeta$ are in the same class $E_k$}\\
\alpha^+&\text{ otherwise}
\end{array}
\right.
\]
We denote by $r^{\e,\text{Max}}$ the associated jump process with constant rates $\lambda_{\xi,\zeta}$. We have the following stochastic domination:
\[
\PP(r^\e_{t+h}(i)=\zeta|r^\e_t(i)=\xi)\leq\PP(r^{\e,\text{Max}}_{t+h}(i)=\zeta|r^{\e,\text{Max}}_t(i)=\xi)
\]
We construct the process $\bar{r}^{\e,\text{Max}}$ as $\bar{r}^\e$ by aggregation. The sequence $(\bar{r}^{\e,\text{Max}},\e\in]0,1])$ is tight in $\mathbb{D}([0,T],\{1,\cdots,l\}^{|\ZZ|})$, see for instance \cite{Yin_Zhang}. Therefore, by comparison, the sequence $(\bar{r}^{\e},\e\in]0,1])$ is also tight in $\mathbb{D}([0,T],\{1,\cdots,l\}^{|\ZZ|})$, see for instance \cite{Ja_Sh}. Moreover, the sequence $(u^\e,\e\in]0,1])$ is tight in $\mathbb{D}([0,T],H)$ by applying the arguments developed for the "all-fast" case. Endowing $H\times\{1,\cdots,l\}^{|\ZZ|}$ with the product topology we see that the sequence $((u^\e,\bar{r}^\e),\e\in]0,1])$ is tight in $\mathbb{D}([0,T],H\times\{1,\cdots,l\}^{|\ZZ|})$. 

We must now deal with the identification of the limit. There are six steps in the proof of the identification of the limit, we have to check that these six steps generalize to the general case (jumping slow-fast case).

In STEP 1, by the same arguments, we obtain that it is sufficient to show that:
\[
\lim_{\e\to0}\EE\left| \int_0^t<G_{r^\e_s}(u^\e_s)-F_{\bar{r}^\e_s}(u^\e_s),\phi>ds\right|^2=0
\]

In STEP 2, the Poisson equation becomes: 
\begin{eqnarray}\label{Poisson_g}
\mathcal{B}f(u,r)&=&<G_{r}(u)-F_{\bar r}(u),\phi>
\end{eqnarray}
where $\Ba$ is now the "fast" part of the generator ($f$ does not depend on $\bar r$ explicitly because $\bar r$ is constructed from $r$). A configuration $r$ for the ion channels is now:
\[
r\in E_{j_1}\times\cdots\times E_{j_{N-1}}
\]
where $(j_1,\cdots,j_{N-1})\in \{1,\cdots,l\}^{N-1}$ (noting that $|\ZZ|=N-1$). That is each channel is in one of the class $E_j$ for $j\in\{1,\cdots,l\}$. Then, since for fixed $u$ the quasi-stationary measure associated to the class $E_{j_k}$ for $k\in\{1,\cdots,N-1\}$ is $\mu_{j_k}(u)$, we have that the kernel of $\mathcal{B}^*$ is spanned by:
\[
\{\mu_{j_1}(u)\times\cdots\times\mu_{j_{N-1}}(u),(j_1,\cdots,j_{N-1})\in \{1,\cdots,l\}^{N-1}\}
\]
 when $u$ is held fixed. Then, by the Fredholm alternative and the definition of the averaged function $F_{\bar r}(u)$, the Poisson equation (\ref{Poisson_g}) has a solution. Uniqueness follows by the projection condition:
\begin{equation}
\int_{\RR}f(u,r)\mu_{j_1}(u)(dr)\times\cdots\times\mu_{j_{N-1}}(u)(dr)=0,\quad \forall (j_1,\cdots,j_{N-1})\in \{1,\cdots,l\}^{N-1}
\end{equation}

The proof of the last three steps is then exactly the same as in the "all fast" case.

\section{Example}\label{section_ex}

In this section we give a concrete example where our result allows to reduce the complexity of a neuronal model staying nevertheless at a stochastic level.

We consider a usual Hodgkin-Huxley model but, to be very straightforward in the application of our result, we only consider the sodium current i.e. all the ion channels are sodium channels. It is still a case of interest since sodium channels are involved in the increasing phase of an action potential. At a fixed potential, the kinetic of a sodium channel is described by Figure \ref{kin_Na} which represents the states and the jump rates of a continuous time Markov chain denoted by $r^\e(i)$ for the channel at position $\frac{i}{N}$ for $i\in\ZZ$.  A sodium channel can be in $8$ different states denoted by $m_ih_j$ for $i\in\{0,1,2,3\}$ and $j\in\{0,1\}$, the state $m_3h_1$ is the only "open" state for the sodium channel (see \cite{Hille} for more details). We simply write $a_m,b_m$ and $a_h,b_h$ for $a_m(u),b_m(u)$ and $a_h(u),b_h(u)$ (see Appendix \ref{app} for more details on the rate functions). $m$ is the fast variable and $h$ the slow variable for the kinetic of a sodium channel. 
\begin{figure}
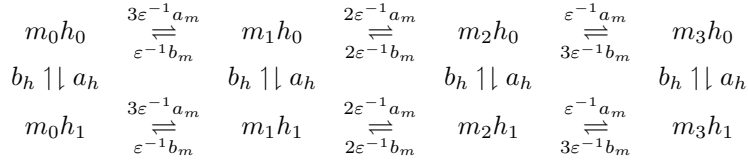

\[
\begin{array}{ccccccc}
m_0h_0&\underset{\e^{-1}b_m}{\overset{3\e^{-1}a_m}{\rightleftharpoons}}&m_1h_0&\underset{2\e^{-1}b_m}{\overset{2\e^{-1}a_m}{\rightleftharpoons}}&m_2h_0&\underset{3\e^{-1}b_m}{\overset{\e^{-1}a_m}{\rightleftharpoons}}&m_3h_0\\
b_h\upharpoonleft\downharpoonright a_h&~&b_h\upharpoonleft\downharpoonright a_h&~&b_h\upharpoonleft\downharpoonright a_h&~&b_h\upharpoonleft\downharpoonright a_h\\
m_0h_1&\underset{\e^{-1}b_m}{\overset{3\e^{-1}a_m}{\rightleftharpoons}}&m_1h_1&\underset{2\e^{-1}b_m}{\overset{2\e^{-1}a_m}{\rightleftharpoons}}&m_2h_1&\underset{3\e^{-1}b_m}{\overset{\e^{-1}a_m}{\rightleftharpoons}}&m_3h_1
\end{array}
\]
\caption{Kinetic of sodium channels.}\label{kin_Na}
\end{figure}

Each channel can therefore be in one of these eight states divided in two classes : $E=E_0\sqcup E_1$ where $E_0=\{m_0h_0,m_1h_0,m_2h_0,m_3h_0\}$ and $E_1=\{m_0h_1,m_1h_1,m_2h_1,m_3h_1\}$. Inside these two classes the states communicate at fast rates and transition between these two classes occurs at slow rates. A sodium channel is open if and only if it is in the state $m_3h_1$. We then define $c_{m_3h_1}=c_{\text{Na}}$,  $v_{m_3h_1}=v_{\text{Na}}$ and $c_\xi=0$, $v_\xi=0$  when $\xi\in E\setminus\{m_3h_1\}$.

The equation (\ref{dyn}) describing the evolution of the potential, for a number $N$ of ion channels, is here given by :
\begin{equation}\label{ex}
\partial_t u^\e=K\D u^\e + \frac{1}{N}\sum_{i\in \ZZ}c_{\text{Na}} 1_{m_3h_1}(r^\e(i))(v_{\text{Na}}-u(\frac{i}{N}))\delta_{\frac{i}{N}}
\end{equation}
The global variable $u$ represents the difference of potential between the inside and the outside of the axon membrane.  We see that Equation (\ref{ex}) is indeed of the form that we have studied in this paper. $K$ is a constant related to the radius and the internal conductivity of the axon (see Appendix \ref{app}).

Let us compute the different generators and quasi-invariant measures associated to this model according to our description. The two "fast" generators are the same and are given for $u(\frac iN)$ fixed (again, we do not make the dependence appear), by :
\[
\Ba_j=\left(\begin{array}{cccc}
-3a_m&3a_m&0&0\\
b_m&-b_m-2a_m&2a_m&0\\
0&2b_m&-2b_m-a_m&a_m\\
0&0&3b_m&-3b_m
\end{array}
\right)
\]
with $j=0$ or $1$. We can compute the associated quasi-invariant measure $\mu_j(u(\frac iN))$ ($j=0$ or $1$), the only term of interest for us is :
\[
\mu_1(u(\frac iN))(m_3h_1)=\frac{1}{\left(1+\frac{b_m(u(\frac iN))}{a_m(u(\frac iN))}\right)^3}
\]
The reader familiar with the classical Hodgkin-Huxley model will notice that $\mu_1(u(\frac iN))(m_3h_1)$ is in fact equal to the steady-state function associated to the ODE describing the motion of the $m$-gates in the classical deterministic Hodgkin-Huxley description, see for example \cite{HH}. 

The asymptotic aggregated Markov chain $\bar r$ is valued in the two-state space $\{0,1\}$. According to Proposition \ref{YZ} its generator is:
\[
\left(\begin{array}{cc}
-a_h(u(\frac iN))&a_h(u(\frac iN))\\
b_h(u(\frac iN))&-b_h(u(\frac iN))
\end{array}
\right)
\]
Therefore, according to Theorem \ref{APDE}, the reduced model is described by the following PDE coupled with the continuous-time Markov chain $\bar r$ :
\begin{equation}\label{ex_av}
\partial_t u=K\D u + \frac{1}{N}\sum_{i\in \ZZ}1_{1}(\bar r(i))c_{\text{Na}} \mu_1(u(\frac{i}{N}))(m_3h_1)(v_{\text{Na}}-u(\frac{i}{N}))\delta_{\frac{i}{N}}
\end{equation}
We display a realization of the averaged piecewise deterministic Markov process in Figure \ref{fig2}. To perform the numerical simulation (in C) we extended Riedler's work (cf. \cite{Ried}) to our particular framework. In  \cite{Ried}, an algorithm (Algorithm A2) is proposed to simulate the trajectory of a PDMP in finite dimension. The idea is to simulate the jumping part of the PDMP and to use an accurate method to simulate the ODE constituting the deterministic part of the PDMP. The kinetic of the jumping component for PDMP in finite and infinite dimensions have the same form. Here we simulate the jumping part of our infinite dimensional PDMP following \cite{Ried} and simulate the PDE between successive jumps by a deterministic scheme.  For the PDE, we used  an explicit finite difference Euler scheme in space and time. Simulating the jumps of an inhomogeneous time Markov chain is classical and there exist a lot of efficient numerical schemes for PDE. Therefore the described method is natural. This argument is of course not a proof but gives an heuristic interpretation of our approach.
   
\begin{figure}
\begin{center}
\includegraphics[width=10cm]{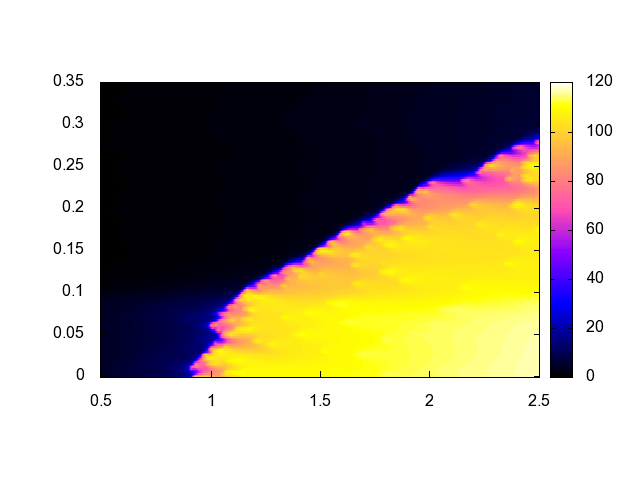}
\caption{Simulation of the action potential against space (vertical axis) and time (horizontal axis).  The averaged model (\ref{ex_av}) is displayed for a number of ion channels fixed to $250$. We see what we expect from the partial differential equation: a traveling wave connecting the initial state $0$ to the stable state $v_\text{Na}$.}\label{fig2}
\end{center}
\end{figure}

\section{Concluding remarks}

In this paper, we apply the methods of stochastic averaging to a fully coupled PDMP in infinite dimensions. To the best of our knowledge it is the first time that such a problem is considered. The two main difficulties are due, on one hand to the infinite dimensional setting and on the other hand to the fact that the system is fully-coupled. Regarding the fully-coupled property, our task was made easier by the uniform bound on the macroscopic component of the PDMP and on the jump rates of the continuous time Markov chain. As an application,  we have shown how to reduce the complexity of a generalized Hodgkin-Huxley model which can be used as a paradigm in the modelling of the propagation of an action potential for neurons but also for cardiac cells.

Let us make some comments about a possible extension of our work to other forms of generalized Hodgkin-Huxley models. Firstly, to choose that the ion channels are uniformly distributed along the axon ($\frac iN$ for $i\in\ZZ$) is just a simple way to fix ideas. We could also choose any distribution in a finite subset $\mathcal{N}$ of $\mathring{I}$ without  changing anything to our work except the notations.

Secondly, in \cite{Buck_Ried}, the authors noticed that it can be of interest to replace the Dirac point masses $\delta_
{\frac iN}$ of the spatial stochastic Hodgkin-Huxley model by functions of the form $|U(\frac iN)|^{-1}1_{U(\frac iN)}(\cdot)$ where $U$ is a neighborhood of the point $\frac{i}{N}$. Indeed, the concentration of ions is essentially homogeneous in a spatially extended domain around an open channel. Along the same line, one can argue that we could replace the Dirac point 
masses $\delta_{\frac iN}$ by any smoother approximation $\phi_{\frac iN}$. The mathematics become then a little easier. The methods presented in this paper apply to both cases.

The next step after the present averaging result is to prove the associated central limit theorem. That is to say give more precision on how the original model converges in law to the averaged model.

Another line of work can be to consider that not only some ion channels but also the membrane potential have a faster kinetic. It seems that this situation is closer to the real behavior of the nerve impulse \cite{Hille}. This last question is related to the study of invariant measures for PDMP and the necessity to find explicit form or bounds for these invariant measures. These will be the object of a future work.
\\
\\
{\bf Acknowledgments :} the authors warmly thank the anonymous referee for his/her helpful comments,  suggestions and corrections which led to a significant improvement of the previous versions of the manuscript.

\appendix

\section{Austin's lemma about the semi-group $e^{\D t}$}\label{app_lem}

We recall here a lemma in \cite{Austin}. Note that the last part of conclusion 1. of the following lemma is contained in the proof of the corresponding lemma in \cite{Austin}.

\begin{lemma}\label{Aulem}
Let $y$ be in the interior of $I$.  Then $e^{\Delta t}\delta_y$ is a smooth function on $I$ vanishing at the end-points for any $t > 0$. Furthermore:
\begin{enumerate}
\item there is some constant $C_1 > 0$, depending on $T$ but otherwise not on $t \in [0,T]$, such that for any continuous function $u:[0,t] \to\R$ we have that the function
\[I \to \R: x \mapsto \int_0^t u(s) e^{\Delta (t-s)}\delta_y(x) ds\]
is in $H$ and satisfies the estimate
\[\bigg\|\int_0^t u(s)  e^{\Delta (t-s)}\delta_y(\cdot)ds\bigg\|_H \leq C_1\|u\|_\infty\]
Moreover for any $\eta>0$ we can choose $\e>0$ so small that:
\[
\bigg\|\int_{t-\e}^t u(s)  e^{\Delta (t-s)}\delta_y(\cdot)ds\bigg\|_H \leq \frac12\eta\|u\|_\infty
\]
\item for any fixed $\varepsilon > 0$ there is some constant
$C_2(\varepsilon)$, depending on $\varepsilon$ and $T$ but
otherwise not on $t \in [0,T]$, such that for any continuous
function $u:[0,t] \to \R$ we have
\[\bigg\|\int_0^{t-\varepsilon} u(s)  e^{\Delta (t-s)}\delta_y(\cdot) ds\bigg\|_H \leq C_2(\varepsilon)\int_0^{t-\varepsilon}|u(s)|ds\]
for any $t \in [0,T]$.
\end{enumerate}
\end{lemma}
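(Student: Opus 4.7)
The smoothness claim and the vanishing at the end-points are standard properties of the Dirichlet heat semigroup on $I$: for $t>0$, $e^{\Delta t}$ maps $H^*$ into $\bigcap_{k\geq 0}\mathcal{D}(\Delta^k)$, whose elements are $\mathcal{C}^\infty$ functions vanishing at $0$ and $1$. I will spend no time on this and focus on the two estimates, both of which are proved from a single spectral computation.

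The backbone of the proof is the expansion, valid for $y\in\mathring{I}$ and $t>0$,
\[
e^{\Delta t}\delta_y \;=\; \sum_{k\geq 1}\, e^{-(k\pi)^2 t}\,(1+(k\pi)^2)\,e_k(y)\,e_k,
\]
which follows from the identification $\langle \delta_y,e_k\rangle = e_k(y)$, the Riesz representation in $H$, and the spectral decomposition of $\Delta$ recalled in the preliminaries. Since $\{e_k\}$ is an orthonormal basis of $H$ and $|e_k(y)|^2\leq 2/(1+(k\pi)^2)$, Parseval gives, for any continuous $u:[0,t]\to\R$ and any $0\leq a<b\leq t$,
\[
\Bigl\|\int_a^b u(s)\,e^{\Delta(t-s)}\delta_y\,ds\Bigr\|_H^2
= \sum_{k\geq 1}(1+(k\pi)^2)^2\,e_k(y)^2\,\Bigl(\int_a^b u(s)e^{-(k\pi)^2(t-s)}ds\Bigr)^2.
\]
Bounding $|u(s)|\leq \|u\|_\infty$ and the time integral by $(1-e^{-(k\pi)^2(b-a)})/(k\pi)^2$, then using the pointwise bound on $e_k(y)^2$, this is at most
\[
2\|u\|_\infty^2\sum_{k\geq 1}\frac{(1-e^{-(k\pi)^2(b-a)})^2(1+(k\pi)^2)}{(k\pi)^4}.
\]

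For conclusion 1, I take $(a,b)=(0,t)$, use $1-e^{-(k\pi)^2 t}\leq 1$, and observe that $\sum_k (1+(k\pi)^2)/(k\pi)^4<\infty$; setting $C_1$ equal to the square root of twice this sum yields the first estimate, uniformly in $t\in[0,T]$. For the refined ``moreover'' claim with $(a,b)=(t-\varepsilon,t)$, I keep the factor $(1-e^{-(k\pi)^2\varepsilon})^2$ inside the sum: this is dominated term-by-term by the summable sequence above and tends to $0$ as $\varepsilon\to 0^+$, so dominated convergence lets me pick $\varepsilon>0$ making the sum smaller than $\eta^2/4$, which is independent of $t$ and of the continuous function $u$.

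For conclusion 2, I proceed slightly differently, isolating the semigroup factor. For $t-s\geq\varepsilon$ the same Parseval identity applied to $e^{\Delta(t-s)}\delta_y$ alone gives
\[
\|e^{\Delta(t-s)}\delta_y\|_H^2 \;\leq\; 2\sum_{k\geq 1}(1+(k\pi)^2)\,e^{-2(k\pi)^2\varepsilon} \;=:\; C_2(\varepsilon)^2,
\]
a finite constant depending only on $\varepsilon$ (not on $t\in[0,T]$). The $H$-valued triangle inequality (Bochner) then yields
\[
\Bigl\|\int_0^{t-\varepsilon}u(s)e^{\Delta(t-s)}\delta_y\,ds\Bigr\|_H
\;\leq\;\int_0^{t-\varepsilon}|u(s)|\,\|e^{\Delta(t-s)}\delta_y\|_H\,ds
\;\leq\;C_2(\varepsilon)\int_0^{t-\varepsilon}|u(s)|\,ds.
\]

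The only technical point worth flagging is the interchange of summation and integration used when writing the spectral expansion of the Bochner integral; this is justified because the partial sums converge in $H$ uniformly on compact subsets of $(0,T]$ (thanks to the factor $e^{-(k\pi)^2(t-s)}$ controlling the tail) and $u$ is continuous and bounded. The constants $C_1$ and $C_2(\varepsilon)$ produced are explicit, depend on $T$ only through the requirement $t\in[0,T]$ (in fact only $C_2$ has any $\varepsilon$-dependence, blowing up as $\varepsilon\to 0$ like $\varepsilon^{-3/2}$), and the bounds are uniform in $y\in\mathring I$ because $|e_k(y)|\leq \sqrt{2/(1+(k\pi)^2)}$ for every $y$.
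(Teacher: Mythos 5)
The paper does not actually prove this lemma: it is quoted from Austin's article \cite{Austin}, with only the remark that the ``moreover'' clause of conclusion 1 is extracted from Austin's proof rather than from his statement. So there is no in-paper proof to compare against; what can be said is that your spectral argument is correct and is essentially the computation the authors themselves perform wherever they use the lemma --- the expansion $e^{\Delta(t-s)}\delta_{i/N}=\sum_{k\geq1}e^{-(k\pi)^2(t-s)}(1+(k\pi)^2)e_k(\tfrac iN)e_k$ and the series $\sum_{k\geq1}(1+(k\pi)^2)(1-e^{-(k\pi)^2\theta})^2/(k\pi)^4$ appear verbatim in Step 1 of their tightness proof, so your argument is fully consistent with the paper's usage. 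Two small points to tighten. First, the factor $(1+(k\pi)^2)$ in the expansion does \emph{not} come from the Riesz representation of $\delta_y$ in $H$ --- that would give coefficient $e_k(y)$ only; it comes from extending the semigroup to $H^*$ by $L^2$-pivot duality, $\langle e^{\Delta t}\delta_y,\phi\rangle=(e^{\Delta t}\phi)(y)$, which yields $\sum_k e^{-(k\pi)^2t}f_k(y)f_k$ with $f_k=\sqrt{1+(k\pi)^2}\,e_k$. Your formula is the correct one (and the one the paper uses), but the justification as written would produce the wrong constant, so it should be rephrased. Second, in your closing aside, $C_2(\varepsilon)^2\sim\varepsilon^{-3/2}$, hence $C_2(\varepsilon)$ itself blows up like $\varepsilon^{-3/4}$ rather than $\varepsilon^{-3/2}$; this is immaterial to the lemma. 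Everything else --- the Parseval identity, the uniform bound $e_k(y)^2\leq 2/(1+(k\pi)^2)$, the dominated-convergence argument for the ``moreover'' clause, and the Bochner triangle inequality for conclusion 2 --- is sound and uniform in $t$, $u$ and $y$ as required.
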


\section{Two theorems about tightness in Hilbert spaces}\label{Met}

We recall here two important results of \cite{metivier} about tightness in Hilbert spaces with notations and choice of spaces adapted to our need.

\begin{theorem}[General criterion for tightness]\label{GCT}
Let us assume that $(u^\e,\e\in]0,1])$ satisfies Aldous's condition which means that for any $\delta,M>0$, there exist $\eta,\e_0>0$ such that for all stopping times $\tau$ such that $\tau+\eta<T$:
\begin{equation}\label{Ald}
\sup_{\e\in]0,\e_0]}\sup_{\theta\in]0,\eta[}\PP(\|u^\e_{\tau+\theta}-u^\e_\tau\|_H\geq M)\leq\delta
\end{equation}
If moreover, for each $t\in[0,T]$ fixed the family $(u^\e_t,\e\in]0,1])$ is tight in $H$ then $(u^\e,\e\in]0,1])$ is tight in $\mathbb{D}([0,T],H)$.
\end{theorem}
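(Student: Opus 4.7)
The plan is to deduce tightness in $\mathbb{D}([0,T],H)$ by combining Prokhorov's theorem with a Skorokhod-type compactness criterion in the Polish space of c\`adl\`ag paths. Since $H$ is a separable Hilbert space, $\mathbb{D}([0,T],H)$ is Polish, so tightness of $(u^\e, \e\in]0,1])$ amounts to producing, for each $\delta>0$, a relatively compact set $K_\delta\subset \mathbb{D}([0,T],H)$ with $\PP(u^\e\in K_\delta)\geq 1-\delta$ uniformly in $\e$.

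I would rely on the analogue of Arzel\`a--Ascoli in Skorokhod space: a subset $K\subset \mathbb{D}([0,T],H)$ is relatively compact if and only if (i) for each $t$ in a dense subset of $[0,T]$ the section $\{f(t):f\in K\}$ is relatively compact in $H$, and (ii) the modified c\`adl\`ag modulus
\[
w'(f,\eta):=\inf_{(t_i)}\max_i \sup_{s,s'\in[t_{i-1},t_{i})}\|f(s)-f(s')\|_H,
\]
where the infimum runs over partitions of mesh at least $\eta$, satisfies $\lim_{\eta\to 0}\sup_{f\in K}w'(f,\eta)=0$. Ingredient (i) is immediate from the pointwise tightness hypothesis on $(u^\e_t)$ applied at a countable dense set of times. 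The heart of the argument, and the step I expect to be the main obstacle, is the conversion of Aldous's stopping-time condition (\ref{Ald}) into an in-probability bound on $w'(u^\e,\eta)$ uniform in $\e$: one introduces the successive stopping times $\tau_0=0$ and $\tau_{k+1}=\inf\{t>\tau_k:\|u^\e_t-u^\e_{\tau_k}\|_H\geq M\}\wedge T$, and uses (\ref{Ald}) together with the strong Markov property to bound the probability that too many of the increments $\tau_{k+1}-\tau_k$ are smaller than $\eta$. Iterating the stopping-time control on disjoint windows is the delicate but classical combinatorial point.

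With both ingredients in hand, the assembly is a standard union-bound construction. Given $\delta>0$, choose a countable dense family of times $(t_n)\subset[0,T]$, and sequences $\eta_n\downarrow 0$, $M_n\downarrow 0$ such that $\sup_{\e}\PP(w'(u^\e,\eta_n)>M_n)\leq \delta\,2^{-n-2}$ using the modulus estimate from step above, and compacts $K_n\subset H$ such that $\sup_{\e}\PP(u^\e_{t_n}\notin K_n)\leq \delta\,2^{-n-2}$ using the pointwise tightness hypothesis. The set of c\`adl\`ag paths $f$ satisfying $w'(f,\eta_n)\leq M_n$ for all $n$ and $f(t_n)\in K_n$ for all $n$ is relatively compact by the criterion above, and a union bound gives that it carries mass at least $1-\delta$ under every $u^\e$. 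Taking its closure produces $K_\delta$, and Prokhorov's theorem concludes the proof.
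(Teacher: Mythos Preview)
The paper does not actually prove Theorem~\ref{GCT}: it is stated in Appendix~\ref{Met} as a result \emph{recalled} from M\'etivier~\cite{metivier}, with no proof given. So there is no ``paper's own proof'' to compare against; your proposal supplies an argument where the paper simply quotes the literature.

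That said, your sketch follows the standard route to Aldous's criterion (Prokhorov plus the Skorokhod-space compactness characterisation, with the stopping-time construction to control the c\`adl\`ag modulus), and is broadly correct at the level of an outline. One point deserves care: you invoke ``the strong Markov property'' when iterating the stopping times $\tau_k$, but Theorem~\ref{GCT} makes no Markov assumption on the processes $u^\e$. The classical proof (see e.g.\ Billingsley or Jacod--Shiryaev~\cite{Ja_Sh}) does not use the strong Markov property; the Aldous condition~(\ref{Ald}) is assumed for \emph{all} stopping times bounded by $T$, and that is already enough to control the probability that many of the inter-stopping increments $\tau_{k+1}-\tau_k$ are short, via a direct union-bound / counting argument on the $\tau_k$. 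If you keep the Markov reference in, the proof as written does not apply to the generality claimed in the statement.
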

\begin{theorem}[Tightness in a Hilbert space]\label{TH}
$H$ is a separable Hilbert space endowed with a basis $\{e_k,k\geq1\}$. We denote by $L_k$ for $k\geq1$:
\[
L_k=span\{e_i,1\leq i\leq k\}
\]
Then  $(u^\e_t,\e\in]0,1])$ is tight in $H$ if, and only if, for any $\delta,\eta>0$ there is $\rho,\e_0>0$ and $L_{\delta,\eta}\subset \{L_k,k\geq1\}$ such that :
\begin{enumerate}
\item\begin{equation}\label{TH1} \sup_{\e\in]0,\e_0]} \PP(\|u^\e_t\|_H>\rho)\leq\delta\end{equation}
\item \begin{equation}\label{TH2} \sup_{\e\in]0,\e_0]} \PP(d(u^\e_t,L_{\delta,\eta})>\eta)\leq\delta\end{equation}
\end{enumerate}
where $d(u^\e_t,L_{\delta,\eta})=\inf_{v\in,L_{\delta,\eta}}\|u^\e_t-v\|_H$.
\end{theorem}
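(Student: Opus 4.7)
The plan is to prove this classical characterization of tightness in a separable Hilbert space by establishing the two implications separately. The guiding idea is that the Heine--Borel theorem fails in infinite dimensions, so compactness must be enforced by two independent conditions: a uniform bound (condition (\ref{TH1})), together with a uniform ``essentially finite-dimensional'' control on the tails (condition (\ref{TH2})).

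For the necessity direction, assume $(u^\e_t)$ is tight. Then for any $\delta>0$ there is a compact set $K\subset H$ with $\sup_\e \PP(u^\e_t\notin K)\leq\delta$. Boundedness of $K$ immediately yields (\ref{TH1}) by choosing $\rho$ larger than $\sup_{v\in K}\|v\|_H$. For (\ref{TH2}), I would use that for any $\eta>0$ there exists an integer $k$ such that $\sup_{v\in K}\|v-P_{L_k}v\|_H\leq\eta$, where $P_{L_k}$ denotes the orthogonal projection onto $L_k$. This follows from the pointwise convergence $\|v-P_{L_k}v\|_H\downarrow 0$ together with Dini's theorem on the compact set $K$ (the maps $v\mapsto\|v-P_{L_k}v\|_H$ being continuous and monotone decreasing in $k$). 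Since $d(v,L_k)=\|v-P_{L_k}v\|_H$, taking $L_{\delta,\eta}=L_k$ yields (\ref{TH2}).

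For the sufficiency direction, which is the more substantive part, fix $\delta>0$. By (\ref{TH1}), choose $\rho>0$ with $\sup_{\e\leq\e_0}\PP(\|u^\e_t\|_H>\rho)\leq\delta/2$. By (\ref{TH2}) applied with parameters $(\delta/2^{n+1},1/n)$, choose finite-dimensional subspaces $L^{(n)}\in\{L_k,k\geq 1\}$ such that $\sup_{\e\leq\e_n}\PP(d(u^\e_t,L^{(n)})>1/n)\leq\delta/2^{n+1}$. Define
\[
K=\bigl\{v\in H:\|v\|_H\leq\rho\text{ and }d(v,L^{(n)})\leq 1/n\text{ for all }n\geq 1\bigr\}.
\]
This $K$ is closed as an intersection of closed level sets of continuous functions, and bounded by $\rho$. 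The key point is that $K$ is totally bounded: for any $\eta>0$, pick $n$ with $1/n<\eta/2$; then $K$ lies in the $\eta/2$-neighborhood of the bounded subset $\{v\in L^{(n)}:\|v\|_H\leq\rho\}$ of the finite-dimensional space $L^{(n)}$, which is totally bounded. Completeness of $H$ then promotes total boundedness plus closedness to compactness. A union bound gives $\PP(u^\e_t\notin K)\leq\delta/2+\sum_{n\geq 1}\delta/2^{n+1}=\delta$, uniformly in $\e$ smaller than the infimum of the $\e_n$'s.

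The main obstacle will be the compactness of $K$ in the sufficiency direction: boundedness and closedness alone are insufficient in infinite dimensions, and the entire role of assumption (\ref{TH2}) is to supply the uniform tail control needed to upgrade boundedness to total boundedness. A subsidiary subtlety is the selection of a single threshold $\e_0>0$ valid for the countably many applications of (\ref{TH2}); this can be handled by taking an infimum of the $\e_n$'s, with any finitely many exceptional values of $\e$ handled individually since each $u^\e_t$ is automatically tight as a Borel random variable in a separable Hilbert space.
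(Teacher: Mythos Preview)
The paper does not prove this theorem at all: it is stated in Appendix~\ref{Met} purely as a quotation from M\'etivier \cite{metivier}, with the preface ``We recall here two important results of \cite{metivier} about tightness in Hilbert spaces.'' There is therefore no proof in the paper to compare your argument against.

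Your proof is the standard one and is essentially correct. The necessity direction via Dini's theorem on a compact set is clean, and in the sufficiency direction the construction of $K$ and the verification of total boundedness are exactly the right ideas.

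One point deserves a little more care. In the sufficiency direction you apply condition~(\ref{TH2}) countably many times, obtaining thresholds $\e_0^{(n)}$, and then restrict to $\e$ below their infimum. If $\inf_n \e_0^{(n)}=0$ your proposed fix (``finitely many exceptional values of $\e$ handled individually'') does not work as written, since the exceptional set $]\,\inf_n \e_0^{(n)},1]$ is uncountable, not finite. In the paper's actual application this issue never arises because the estimates used to verify (\ref{TH1}) and (\ref{TH2}) are uniform over \emph{all} $\e\in]0,1]$, so one may simply take $\e_0=1$ throughout. If you want a self-contained proof of the theorem exactly as stated, the cleanest repair is to note that tightness of the full family $(u^\e_t)_{\e\in]0,1]}$ is equivalent to tightness along every sequence $\e_k\to 0$ together with tightness of each individual law; along a fixed sequence one may diagonalise so that the thresholds $\e_0^{(n)}$ cause no trouble.
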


\section{Functions and data for the simulation}\label{app}

For the simulation, the jump rate functions are given, for real $u$, by :
\begin{eqnarray*}
a_m(u)&=&\frac{0.1(25-u)}{e^{2.5-0.1u}-1}\\
b_m(u)&=&4e^{\frac{-u}{18}}\\
a_h(u)&=&0.07e^{\frac{-u}{20}}\\
b_h(u)&=&\frac{1}{e^{3-0.1u}+1}
\end{eqnarray*}
The maximal conductance associated to sodium ions is $c_{\text{Na}}=120 \text{ mS.cm}^{-2}$ and the potential at rest is $v_{\text{Na}}=115$ mV. The constant $K$ is given by $K=\frac{a}{2R}$ where $a$ is the radius of the axon, $a=0.0238$ cm and $R$ is the internal resistance of the axon, $R=34.5$ $\Omega$.cm, these data are classical, see for example \cite{HH}. We have used an input on the potential equal to $\mu=6.7$ all along the time on the segment $[0,0.1]$ of the axon.

\end{document}